\documentclass[reqno,11pt]{amsart} 

\usepackage{amsmath,amssymb,amsthm,mathtools}
\usepackage{mathrsfs}       % Script fonts
\usepackage{hyperref}
\usepackage[nameinlink]{cleveref}
\usepackage{xcolor}         % Colored text
\usepackage{tikz}           % Drawing
\usepackage{verbatim}       % Commenting
\usepackage{setspace}       % Line spacing
\usepackage[normalem]{ulem}
\title[On a Weiss-type Almost Monotonicity Formula]{On a Weiss-type Almost Monotonicity Formula}

\author[A. Sobral]{Aelson Sobral}
\address{Applied Mathematics and Computational Sciences (AMCS), Computer, Electrical and Mathematical Sciences and Engineering Division (CEMSE), King Abdullah University of Science and Technology (KAUST), Thuwal, 23955-6900, Kingdom of Saudi Arabia}
\email{aelson.sobral@kaust.edu.sa}

\newtheorem{theorem}{Theorem}[section]
\newtheorem{lemma}{Lemma}[section]
\newtheorem{corollary}{Corollary}[section]
\newtheorem{proposition}{Proposition}[section]

\newtheorem{remark}{Remark}[section]

\newcommand{\intav}[1]{\mathchoice 
  {\mathop{\vrule width 6pt height 3 pt depth -2.5pt \kern -8pt \intop}\nolimits_{\kern -6pt#1}} 
  {\mathop{\vrule width 5pt height 3 pt depth -2.6pt \kern -6pt \intop}\nolimits_{#1}}
  {\mathop{\vrule width 5pt height 3 pt depth -2.6pt \kern -6pt \intop}\nolimits_{#1}}
  {\mathop{\vrule width 5pt height 3 pt depth -2.6pt \kern -6pt \intop}\nolimits_{#1}}}

\numberwithin{equation}{section}
\begin{document}

\subjclass[2020]{Primary 35R35. Secondary 35A21}

% 35R35 Free boundary problems for PDEs 

% 35A21 Singularity in context of PDEs

\keywords{Monotonicity formula, Free boundary problems, Non-differentiable functionals}

\begin{abstract} 
We establish a Weiss-type almost-monotonicity formula for a broad class of variable-coefficient energy functionals, assuming only minimal regularity of the coefficients. As an application, we classify blow-up limits for the Alt--Phillips problem with variable coefficients under significantly weaker regularity hypotheses than those imposed in Ara\'ujo et al. [Calc. Var. Partial Differential Equations, 65, no.~1, Paper No.~24 (2026)]. Moreover, by means of a distinct argument, we extend the corresponding free-boundary regularity result. We conclude with a discussion of further extensions, including two-phase analogues.
\end{abstract}  

\date{\today}

\maketitle

%\tableofcontents

\section{Introduction} \label{sct intro}

For a bounded domain $\Omega \subset \mathbb{R}^d$ and a measurable, locally bounded function $\sigma \colon \Omega \times \mathbb{R}^+ \to \mathbb{R}^+$, we consider almost minimizers to the functional
\begin{equation}\label{main functional}
    \mathcal{J}(u,\Omega) \coloneqq \int_{\Omega} \left( \frac{1}{2} \, \langle A(x)\nabla u, \nabla u \rangle + \sigma(x,u)\right) \, dx,
\end{equation}
where $A(x)$ is a uniformly elliptic matrix-valued function. In this work, we establish a new Weiss-type almost monotonicity formula, which proves to be an effective tool for classifying homogeneity in a broad class of problems. Several applications illustrating the scope of this result are also discussed.

Monotonicity formulae of Weiss-type, when available, are key to analyzing the fine structure of the free-boundary. One of the major developments is the homogeneity approach introduced by Weiss~\cite{W2}, which constitutes the main object of study in this paper. Further extensions and generalizations include the Alt--Caffarelli--Friedman monotonicity formula~\cite{ACF}, the Monneau-type monotonicity formula \cite{MP}, and the Caffarelli--Jerison--Kenig result \cite{CJK}.

The goal of this paper is to establish a Weiss-type monotonicity formula for a broad class of energy functionals of the form \eqref{main functional}. We will specify the precise assumptions on the ingredients later, but the key point is that both terms in the energy involve variable coefficients, which may complicate the analysis of the associated free boundary. Since these coefficients possess only limited regularity, minimizers are not expected to satisfy the standard Weiss monotonicity formula. Instead, we derive an almost (or quasi) monotonicity formula, in which an error term quantifies the deviation from monotonicity depending on how far the coefficients are from being constant. Under minimal regularity assumptions, this formulation still allows for the classification of homogeneous blow-ups.

The main idea is conceptually simple and relies on the optimal regularity of the solution near free boundary points. The strategy consists in \emph{freezing the coefficients}: for $x_0 \in F(u)$, we consider
\[
    \mathcal{J}_{x_0}(u,\Omega) \coloneqq \int_\Omega \left( \frac{1}{2} \, \langle A(x_0)\nabla u, \nabla u \rangle + \sigma(x_0,u) \right)\,dx,
\]
and use optimal regularity to compare this with the original functional, by testing against a suitable competitor and then comparing back to the ``frozen-at-$x_0$'' energy functional. This back-and-forth comparison requires some regularity of the coefficients; however, once optimal regularity for minimizers is established, the assumptions on the coefficients can be significantly weakened. This is precisely why the resulting formula is of \emph{almost} monotonicity type. We focus primarily on one-phase Alt--Phillips-type problems, though the same approach can be extended to other kinds of variable coefficient functionals with possibly two-phases, see Section \ref{sct:some-extensions}.

The terminology \emph{almost monotonicity} (or \emph{quasi monotonicity}) appears in the literature in various contexts, though typically for problems of a different nature. We refer to \cite{EP} and \cite{MP}, where almost monotonicity formulas were established for both elliptic and parabolic equations, extending the results of \cite{CJK} and the earlier works of Caffarelli and Kenig. In the context of Weiss- and Monneau-type formulas, we mention \cite{FGS}, which treats the obstacle problem with Lipschitz coefficients and employs these ideas to extend the classical analyses of Caffarelli, Monneau, and Weiss. To the best of our knowledge, this is the first work where such an approach is developed for Alt--Phillips-type problems under very low regularity assumptions on the coefficients.

Even though our class of energy functionals is fairly general, and we work with almost minimizers, our main focus lies on the specific case
\[
	\int_{\Omega} \left( \frac{|\nabla u|^2}{2} + \delta(x)\,u^{\gamma(x)} \right)\,dx,
\]
where $\delta$ is bounded away from zero and $\gamma(x) \in [\min \gamma, \max \gamma] \Subset (0,1)$. This problem was previously studied in \cite{ASTU1}, while the version with variable coefficients was investigated in \cite{ASTU2}. In those works, the full regularity program was carried out, from the analysis of the solutions up to the regularity of the free boundary. In the present paper, we address the final step of that program, substantially relaxing the regularity assumptions on both $\delta$ and $\gamma$ to a more natural setting:
\[
	\delta \in C^{0,\text{Dini}} \quad \text{and} \quad \gamma \in C^{0,\log\text{-Dini}}.
\]
In contrast, \cite{ASTU1} required $\delta,\gamma \in W^{1,q}$ for some $q>d$, a considerably stronger assumption. Determining whether our monotonicity formula applies in full generality lies beyond the scope of this paper and is left for future investigation. In particular, we establish the homogeneity of blow-ups for the problem considered in \cite{ASTU2}, as well as for the Alt--Caffarelli functional with variable coefficients and its two-phase versions.

The approach developed in this paper suggests several natural directions for future work. A first question is whether the strategy of \cite{AKS} can be adapted to the present setting and, if so, what regularity assumptions it would impose on the coefficients. This is not immediate, since in our problem the right-hand side also contains variable coefficients. Another direction is to identify the weakest coefficient regularity under which free-boundary regularity persists, both for the problem studied in \cite{ASTU2} and for the Alt--Caffarelli problem discussed in Section~\ref{sct:some-extensions}. A further challenging problem is to develop a monotonicity-based framework yielding a \(p\)-Laplacian analogue of the monotonicity formula adapted to the problem studied in \cite{STU}.

The paper is organized as follows. We give the formula and proof in Section \ref{sct:almost-monotone-formula}. In Section \ref{sct:Alt-philips-varying}, we give two applications of this new almost monotonicity formula. Finally, in Section \ref{sct:some-extensions}, we show how the formula can be extended to other scenarios.

\section{Definitions and preliminaries}\label{sct:prelim}

This section is devoted to establishing the mathematical setting of the paper. We fix notation and collect preliminary definitions and known results that will be used in the sequel.

\subsection{Mathematical setup and assumptions}

Given a domain $\Omega \subset \mathbb{R}^d$, we say that $u \geq 0$ is an \emph{almost minimizer} to
\begin{equation}\label{def:main functional}
    \mathcal{J}(w) \coloneqq \int \left( \frac{1}{2} \, \langle A(x)\nabla w, \nabla w \rangle + \delta(x)\,w^{\gamma(x)} \right)\,dx,
\end{equation}
in $\Omega$, if for every $r>0$ and $x_0 \in \mathbb{R}^d$ such that $B_r(x_0) \subset \Omega$, we have
\[
    \mathcal{J}(u,B_r(x_0)) \le (1+\varrho(r))\,\mathcal{J}(v,B_r(x_0))
\qquad \forall\, v \in u + H^1_0(B_r(x_0)).
\]
where
\[
    \mathcal{J}(w,\mathcal{O}) \coloneqq \int_{\mathcal{O}} \left( \frac{1}{2} \, \langle A(x)\nabla w, \nabla w \rangle + \delta(x)\,w^{\gamma(x)} \right)\,dx.
\]
When $\varrho \equiv 0$, we say that $u$ is a \emph{local minimizer}.

Central to the analysis in this paper is the notion of the \emph{Alt--Phillips functional frozen at $x_0$}. For a given $x_0 \in \mathbb{R}^n$, we set
\[
	\mathcal{J}_{x_0}(w,\mathcal{O}) \coloneqq \int_\mathcal{O} \left( \frac{1}{2}\, \langle A(x_0)\nabla w, \nabla w \rangle + \delta(x_0)\,w^{\gamma(x_0)} \right)\,dx.
\]

\medskip

The basic assumptions on $A$, $\delta$, $\gamma$, and $\varrho$ are the following. The mapping
$A\colon \Omega \mapsto \mathbb{S}_{\mathrm{sym}}(d)$ is continuous, where $\mathbb{S}_{\mathrm{sym}}(d)$ denotes the space of symmetric $d \times d$ matrices. We assume that $A$ is uniformly elliptic: there exists $\Lambda \in (0,1]$ such that
\begin{equation*}
     \langle A(x)\xi, \xi \rangle \in \big[\Lambda,\Lambda^{-1}\big] \quad \text{for every} \quad x \in \Omega \quad \text{and} \quad |\xi| = 1.
\end{equation*}
Moreover,
\[
    \big[\inf_{\Omega}\delta,\ \sup_{\Omega}\delta\big]\Subset(0,\infty),
    \qquad
    \big[\inf_{\Omega}\gamma,\ \sup_{\Omega}\gamma\big]\Subset(0,1),
\]
so that $\delta$ is bounded away from $0$, and $\gamma$ stays a positive distance away from both $0$ and $1$. Finally, we assume that the gauge function $\varrho$ is a modulus of continuity.
 
We denote by $\omega_A$, $\omega_\delta$, and $\omega_\gamma$ the moduli of continuity of $A$, $\delta$, and $\gamma$, respectively. The precise continuity assumptions imposed on $A$, $\delta$, $\gamma$, and $\varrho$ will be specified in the statements of the main results. Throughout the paper, the notation $\lesssim$ means inequality up to a multiplicative universal constant; here and in what follows, \emph{universal} indicates dependence only on the dimension $d$, the ellipticity constant $\Lambda$, and the bounds for $\delta$ and $\gamma$.

We also assume a quantitative growth control for $u$ near free boundary points. More precisely, for every $x_0\in \partial\{u>0\}\cap\Omega$ and every ball $B_r(x_0)\subset\Omega$, we require
\begin{equation}\label{assumptions: growth for min}
    \sup_{x\in B_r(x_0)}\Big(u(x)+|\nabla u(x)|^{\frac{2}{\gamma(x_0)}}\Big) \lesssim r^{\beta(x_0)},
\end{equation}
where
\[
    \beta(x_0) \coloneqq \frac{2}{2-\gamma(x_0)}.
\]
Such growth estimates are available in several relevant settings; we refer the reader to \cite{ASTU1,ASTU2} for further discussion.

\subsection{Free boundary and pointwise homogeneity}

Throughout the paper, we focus on one-phase almost minimizers $u$ to the functional \eqref{def:main functional} in $\Omega = B_2$. The Weiss-type almost monotonicity formula proved in the next section should also extend to the two-phase setting, provided the corresponding growth assumptions are available, as discussed in the introduction.

The growth assumption \eqref{assumptions: growth for min} is crucial for the fine analysis of the free boundary
\[
    F(u)\coloneqq \partial\{u>0\}\cap B_2.
\]
In particular, it allows us to introduce blow-up sequences at free boundary points $x_0\in F(u)$ by setting
\[
    u_r(x)\coloneqq r^{-\beta(x_0)}\,u(x_0+rx),\qquad r\in(0,1).
\]
By the scaling properties of the functional, together with nondegeneracy and the compactness of (almost) minimizers, for every sequence $r_k\downarrow 0$ there exists a subsequence (still denoted $r_k$) such that $u_{r_k}\to u_0$ locally in $\mathbb{R}^n$, for some nontrivial limit $u_0\not\equiv 0$.

The Weiss-type almost monotonicity formula proved in the next section implies that every blow-up limit $u_0$ is homogeneous about $0$, with homogeneity exponent $\beta(x_0)$. More precisely,
\[
    u_0(\lambda x)=\lambda^{\beta(x_0)}\,u_0(x)
    \qquad\text{for all }\lambda\ge 0.
\]
After the blow-up procedure, we may distinguish the free boundary points into two kinds. We say that $x_0$ is a \emph{regular point} of $F(u)$ if at least one blow-up limit is of the form
\[
    u_0(x)=\vartheta(x_0)\,(x\cdot \nu)_+^{\beta(x_0)},\qquad \nu\in \partial B_1,
\]
and we refer to such profiles as \emph{trivial cones}. The constant $\vartheta(x_0)$ is chosen so that $u_0$ solves the Alt--Phillips equation with coefficients frozen at $x_0$, namely
\[
    \Delta u_0=\delta(x_0)\gamma(x_0)\,u_0^{\gamma(x_0)-1} \quad \text{in} \quad \{u_0>0\}.
\]
The precise expression for $\vartheta(x_0)$ is given in Section \ref{sct:Alt-philips-varying}.

We denote the set of regular points by $\mathcal{R}(u)$ and define the \emph{singular set} by
\[
    \Sigma(u)\coloneqq F(u)\setminus \mathcal{R}(u),
\]
so that
\[
    F(u)=\mathcal{R}(u)\cup \Sigma(u).
\]
Finally, when a $\beta(x_0)$-homogeneous blow-up is not a trivial cone, we call it a \emph{nontrivial minimal cone}. We emphasize that this notion depends on the base point $x_0$ at which the blow-up is taken.

\subsection{Known results}

We state a Lemma that gives compactness of minimizers, as well as convergence of free boundaries. Its proof is rather standard, but we bring it here for completeness.

\begin{lemma}\label{lemma:compactness}
For each $k\in\mathbb N$, let $u_k$ be an almost minimizer in $\Omega$ to the functional
\[
    \mathcal J_k(v) \coloneqq \int \left(\frac{|\nabla v|^2}{2} + \delta_k(x)\,v^{\gamma_k(x)}\right)\,dx.
\]
Assume
\[
u_k \to u \quad \text{in } L^\infty_{\mathrm{loc}}(\Omega)\cap H^1_{\mathrm{loc}}(\Omega),
\quad
\delta_k \to \delta \ \text{in } L^\infty_{\mathrm{loc}}(\Omega),
\quad
\gamma_k \to \gamma \ \text{in } L^\infty_{\mathrm{loc}}(\Omega).
\]
Then, $u$ is an almost minimizer in $\Omega$ to the functional
\[
    \mathcal J(v) \coloneqq \int \left(\frac{|\nabla v|^2}{2} + \delta(x)\,v^{\gamma(x)}\right)\,dx.
\]
Moreover, if $u_k$ is uniformly nondegenerate, meaning that there is a modulus of continuity $\varsigma$ such that
\[
    \sup_{B_r(x)} u_k \geq c\,\varsigma(r), \quad \text{for} \quad x \in \overline{\{u>0\}}\cap \Omega, \quad \text{and} \quad B_r(x) \Subset \Omega,
\]
then
\[
    \chi_{\{u_k>0\}} \to \chi_{\{u>0\}} \quad\text{in }L^1_{loc}(\Omega),
\]
and the free boundaries $F(u_k)$ converge locally to $F(u)$ in the Hausdorff distance.
\end{lemma}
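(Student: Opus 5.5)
To show that $u$ is an almost minimizer of $\mathcal J$, I would pass to the limit directly in the almost-minimality inequality using a cut-off gluing competitor. Fix $B_r(x_0)\Subset\Omega$ and $v\in u+H^1_0(B_r(x_0))$. For $\rho<r$ close to $r$ and a cut-off $\eta\in C^\infty_c(B_r(x_0))$ with $\eta\equiv1$ on $B_\rho(x_0)$, set
\[
v_k \coloneqq \eta v+(1-\eta)u_k+\eta\,(u_k-u)\cdot 0 ,
\]
or, more cleanly, $v_k\coloneqq v+(1-\eta)(u_k-u)$. This is a nonnegative-part–adjusted competitor: replace it by $v_k^+$ if needed, which only lowers the energy, since the potential is increasing in the variable and $|\nabla v_k^+|\le|\nabla v_k|$. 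By construction $v_k\in u_k+H^1_0(B_r(x_0))$, so
\[
\mathcal J_k(u_k,B_r)\le(1+\varrho(r))\,\mathcal J_k(v_k,B_r).
\]
Since $u_k\to u$ in $H^1_{loc}$, we get $\nabla v_k\to\nabla v$ in $L^2(B_r)$, hence the Dirichlet parts converge on both sides.

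For the potential terms I would use the uniform convergences $\delta_k\to\delta$ and $\gamma_k\to\gamma$, together with the elementary estimate
\[
|a^{\gamma_k}-b^{\gamma}|\le |a-b|^{\gamma_k}+|b^{\gamma_k}-b^{\gamma}|.
\]
On bounded sets and with $\gamma$ bounded away from $0$ and $1$, the second term tends to $0$ uniformly. The delicate points are near $b=0$, where $b^{\gamma_k}-b^{\gamma}\to0$ uniformly anyway because both terms are small, and integrability when $v$ is unbounded. To handle the latter, I would first reduce to bounded $v$ by truncating $\min(v,M)$ and using monotone convergence. With this, $\int\delta_k u_k^{\gamma_k}\to\int\delta u^\gamma$ by dominated convergence, and the same holds for $v_k$. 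Passing $k\to\infty$ gives $\mathcal J(u,B_r)\le(1+\varrho(r))\mathcal J(v,B_r)$. In fact, since $v_k=v+(1-\eta)(u_k-u)\to v$ in $H^1$ regardless of $\eta$, the cut-off radius $\rho$ plays no role and no $\rho\to r$ limit is needed.

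For the free boundaries, I would first show $\chi_{\{u_k>0\}}\to\chi_{\{u>0\}}$ pointwise a.e. On $\{u>0\}$ this follows from uniform convergence. On $\operatorname{int}\{u=0\}$, I would argue by contradiction: if $u_k>0$ at points of a compact $K\subset\operatorname{int}\{u=0\}$, then nondegeneracy at a positivity point $y$ of $u_k$ (applied to $u_k$, whose positivity set is what matters) gives $\sup_{B_r(y)}u_k\ge c\,\varsigma(r)$ with $B_r(y)\subset\{u=0\}$. This contradicts $u_k\to0$ uniformly there. The remaining set $\partial\{u>0\}$ has measure zero, which follows from nondegeneracy of $u$ (inherited in the limit, giving a density bound) and a Lebesgue density argument. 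Dominated convergence then gives $L^1_{loc}$ convergence.

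Hausdorff convergence is the standard two-sided argument. If $x_k\in F(u_k)\to x$, then $u(x)=0$ by uniform convergence and the growth bound, and nondegeneracy passed to the limit gives $\sup_{B_r(x)}u\ge c\varsigma(r)$, so $x\in F(u)$. Conversely, if $x\in F(u)$ and $B_r(x)$ contained no point of $F(u_k)$ for infinitely many $k$, then $u_k$ would be either $>0$ or $\equiv0$ on $B_r(x)$. The case $u_k\equiv0$ contradicts $\sup_{B_r}u>0$. The case $u_k>0$ on $B_r(x)$ contradicts $x\in\partial\{u>0\}$, since $B_r(x)$ then contains an open piece of $\{u=0\}$ on which $\chi_{\{u_k>0\}}\equiv1$, against the $L^1$ convergence. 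I expect the main obstacle to be this measure-zero and nondegeneracy-inheritance step, and in particular the precise use of the hypothesis, which is stated for points of $\overline{\{u>0\}}$ rather than $\overline{\{u_k>0\}}$. I would interpret it as holding along $\overline{\{u_k>0\}}$ for each $k$.
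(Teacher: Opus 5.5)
The genuine gap is in the reverse Hausdorff inclusion, in the case where $u_k>0$ on $B_r(x)$ for some $x\in F(u)$ and infinitely many $k$. You assert that $B_r(x)$ then contains an open piece of $\{u=0\}$, but $x\in\partial\{u>0\}$ does not give this. The set $\{u=0\}\cap B_r(x)$ could be closed with empty interior. It is then contained in $\partial\{u>0\}$, hence null by your own measure-zero step, and $\chi_{\{u_k>0\}}\equiv 1$ on $B_r(x)$ is fully compatible with the $L^1_{\mathrm{loc}}$ convergence.

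This is not a hypothetical worry. Take $\delta,\gamma$ constant, let $U_m$ solve $U_m''=\delta\gamma\,U_m^{\gamma-1}$ with $U_m(0)=m>0$, $U_m'(0)=0$, and set $u_m(x)\coloneqq U_m(x_1)$. Then:
\begin{itemize}
\item each $u_m$ is positive everywhere, so $F(u_m)=\varnothing$;
\item the family is uniformly nondegenerate, since comparison with a paraboloid gives $\sup_{B_r(y)}u_m\gtrsim r^{\beta}$ at every point;
\item the first integral $\tfrac12 (U_m')^2=\delta(U_m^{\gamma}-m^{\gamma})$ shows that $u_m\to\vartheta\,|x_1|^{\beta}$ locally uniformly and in $H^1_{\mathrm{loc}}$ as $m\downarrow 0$.
\end{itemize}
The limit is the even reflection of the half-space profile, and its free boundary is $\{x_1=0\}$. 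So uniform convergence, nondegeneracy and $L^1$ convergence of the positivity sets cannot by themselves exclude your second case. (Almost) minimality has to enter, for instance through a lower density estimate $|B_r(x)\cap\{u=0\}|\ge c\,r^d$ at free boundary points of the limit $u$, which is an almost minimizer by the first part. With that estimate your $L^1$ argument closes. The paper's proof does not supply this step either: it claims uniform convergence forces $B_{\varepsilon/2}(x)\subset\{u>0\}$, but $u_k>0$ only passes to $u\ge 0$ in the limit.

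The remaining parts are fine. For the almost-minimality of $u$ you take a different and simpler route than the paper. The paper uses the competitor $v+\varphi_k$, with $\varphi_k$ the harmonic replacement of $u_k-u$, and needs the maximum and Dirichlet principles. Your $v+(1-\eta)(u_k-u)$ (already $\eta\equiv 0$ works) converges to $v$ in $H^1$ and uniformly directly. Passing to $v_k^+$ also handles the sign of the competitor, which the paper does not address.

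Your treatment of $\{u>0\}$ and $\operatorname{int}\{u=0\}$ matches the paper. This includes reading the nondegeneracy hypothesis along $\overline{\{u_k>0\}}$, which is what the paper tacitly uses.

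One small correction: nondegeneracy alone does not give a positive density of $\{u>0\}$ at free boundary points. You also need the upper bound \eqref{assumptions: growth for min}, in particular the gradient bound, to turn a point of $B_r(x)$ where $u\gtrsim r^{\beta}$ into a ball of positivity of radius comparable to $r$. Only then does the Lebesgue density argument give $|\partial\{u>0\}|=0$.
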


\begin{proof}
Fix a ball $B_r(x_0) \subset \Omega$ and let $v\in u+H^1(B_r(x_0))$ be arbitrary. For each $k$, let $\varphi_k$ denote the harmonic replacement of $u_k-u$ in $B_r(x_0)$, namely the unique solution of
\[
    \Delta\varphi_k=0 \quad\text{in }B_r(x_0), \qquad \varphi_k=u_k-u \quad\text{on }\partial B_r(x_0).
\]
Define the competitor
\[
    v_k \coloneqq v+\varphi_k.
\]
Since $v \in u + H^1(B_r(x_0))$, we obtain $v_k\in u_k+H^1(B_r(x_0))$. Therefore, using $v_k$ as a competitor in the almost minimality of $u_k$ yields
\begin{equation}\label{ineq:almost min} 
\begin{aligned} 
    &\int_{B_r(x_0)} \left(\frac{|\nabla u_k|^2}{2} + \delta_k(x)\,u_k^{\gamma_k(x)}\right)\,dx \\ 
    &\le (1 + \varrho(r)) \int_{B_r(x_0)} \left(\frac{|\nabla v_k|^2}{2} + \delta_k(x)\,v_k^{\gamma_k(x)}\right)\,dx . 
\end{aligned} 
\end{equation}

We claim that
\[
    v_k \to v \quad\text{in } L^\infty(\overline{B}_r(x_0))\cap H^1(B_r(x_0)).
\]
Indeed, by the maximum principle for harmonic functions,
\[
    \|\varphi_k\|_{L^\infty(\overline{B}_r(x_0))} \le \|u_k-u\|_{L^\infty(\partial B_r(x_0))} \le \|u_k-u\|_{L^\infty(\overline{B}_r(x_0))}\to 0,
\]
hence $\varphi_k\to 0$ uniformly in $\overline{B}_r(x_0)$, and thus $v_k\to v$ in $L^\infty(\overline{B}_r(x_0))$. Moreover, since $\varphi_k$ minimizes the Dirichlet energy among all functions with the same boundary trace $u_k-u$ on $\partial B_r(x_0)$, we have
\[
    \int_{B_r(x_0)}|D\varphi_k|^2\,dx \le \int_{B_r(x_0)}|\nabla(u_k-u)|^2\,dx \to 0,
\]
which implies $\varphi_k\to 0$ in $H^1(B_r(x_0))$, and so $v_k\to v$ in $H^1(B_r(x_0))$.

Finally, using the convergences $u_k\to u$, $v_k\to v$, $\delta_k\to\delta$, and $\gamma_k\to\gamma$, we may pass to the limit in \eqref{ineq:almost min} to obtain
\[
    \begin{aligned} 
    &\int_{B_r(x_0)} \left(\frac{|\nabla u|^2}{2} + \delta(x)\,u^{\gamma(x)}\right)\,dx \\ 
    &\le (1 + \varrho(r)) \int_{B_r(x_0)} \left(\frac{|\nabla v|^2}{2} + \delta(x)\,v^{\gamma(x)}\right)\,dx . 
    \end{aligned} 
\]
Since $B_r(x_0)\subset\Omega$ and $v\in u+H^1(B_r(x_0))$ were arbitrary, this shows that $u$ is an almost minimizer (with the same gauge $\varrho$) of $\mathcal J$ in $\Omega$.

Next, we show that
\[
    \chi_{\{u_k>0\}} \to \chi_{\{u>0\}} \qquad \text{in } L^1_{\mathrm{loc}}(\Omega).
\]
Since $0\le \chi_{\{u_k>0\}}\le 1$, the dominated convergence theorem reduces the claim to proving
\[
    \chi_{\{u_k>0\}}(x)\to \chi_{\{u>0\}}(x)\qquad \text{for a.e. }x\in\Omega.
\]
By the uniform nondegeneracy of $u_k$ (and hence of the limit $u$), the free boundary $\partial\{u>0\}$ has Lebesgue measure zero, so we may restrict the analysis to
\[
    x \in \Omega\setminus \partial\{u>0\}.
\]
If $u(x)>0$, then, by continuity, there exists $r_x>0$ such that $B_{r_x}(x)\subset\{u>0\}\cap\Omega$. Uniform convergence $u_k\to u$ implies that $u_k>0$ in $B_{r_x/2}(x)$ for all sufficiently large $k$, hence $\chi_{\{u_k>0\}}(x)=1=\chi_{\{u>0\}}(x)$.

If instead $x\in\{u=0\}^\circ$, there exists $r_x>0$ such that $B_{r_x}(x)\subset\{u=0\}\cap\Omega$. We claim that $B_{r_x/2}(x)\subset\{u_k=0\}$ for all large $k$. Otherwise, along a subsequence, there would exist $y_k\in B_{r_x/2}(x)$ with $u_k(y_k)>0$, and so $B_{r_x/2}(x)$ intersects $F(u_k)$. If we take
\[
    x_k\in B_{r_x/2}(x)\cap\partial\{u_k>0\},
\]
Then, by uniform nondegeneracy at the free boundary point $x_k$, we have
\[
    \sup_{B_{r_x}(x)} u_k \ge \sup_{B_{r_x/2}(x_k)} u_k \ge c\,\varsigma(r_x).
\]
However, since $u\equiv 0$ in $B_{r_x}(x)$ and $u_k\to u$ uniformly, we have $\sup_{B_{r_x}(x)}u_k\to 0$, a contradiction. Hence $B_{r_x/2}(x)\subset\{u_k=0\}$ for all large $k$, and therefore $\chi_{\{u_k>0\}}(x)=0=\chi_{\{u>0\}}(x)$. In conclusion, $\chi_{\{u_k>0\}}(x)\to \chi_{\{u>0\}}(x)$ for a.e. $x\in\Omega$.

Finally, we prove the convergence of the free boundaries. Let $K\Subset\Omega$. From the previous argument, if $B_r(x)\subset\{u>0\}$ (respectively $B_r(x)\subset\{u=0\}$), then for $k$ large enough $B_{r/2}(x)\subset\{u_k>0\}$ (respectively $B_{r/2}(x)\subset\{u_k=0\}$). Hence, for every $\varepsilon>0$ and all large $k$,
\[
    F(u_k)\cap K \subset \{y\in K \colon \mathrm{dist}(y,\partial\{u>0\})<\varepsilon\}.
\]
Conversely, if $x\in F(u)\cap K$ and along a subsequence $\mathrm{dist}(x,F(u_k))\geq\varepsilon$, then either $B_\varepsilon(x)\subset\{u_k>0\}$ or $B_\varepsilon(x)\subset\{u_k=0\}$ along that subsequence. By uniform convergence this would force $B_{\varepsilon/2}(x)\subset\{u>0\}$ or $B_{\varepsilon/2}(x)\subset\{u=0\}$, contradicting $x \in F(u)$. Thus, for each $x \in F(u) \cap K$ there exist $x_k\in F(u_k)$ with $x_k\to x$. These two inclusions imply the local Hausdorff convergence of $F(u_k)$ to $F(u)$.
\end{proof}

\section{Weiss almost monotonicity}\label{sct:almost-monotone-formula}

In this section, we state and prove the main result of the paper: a Weiss-type almost monotonicity formula that applies to almost minimizers to \eqref{def:main functional}. As an immediate consequence, we obtain the homogeneity of blow-up limits. The approach developed here significantly reduces the regularity assumptions required on the coefficients $\delta$ and $\gamma$ compared to \cite{ASTU1}. We focus here on the one-phase case and explain in Section \ref{sct:some-extensions} how the argument extends to the two-phase setting.

\begin{theorem}\label{thm:almost-monotonicity}
Let $u \geq 0$ be an almost minimizer to the functional \eqref{def:main functional} in $B_2$. Assume further that
\[
	\omega_A(\cdot) + \omega_\delta(\cdot) + \varrho(\cdot) + |\ln(\cdot)|\omega_\gamma(\cdot) \in C^{0,dini} ([0,2])\quad \text{and} \quad A(x_0) = I_d,
\]
for some $x_0 \in F(u)\cap B_1$. Then, the function $\mathcal{A}_{u,x_0}(r)$ defined by
\[
     r^{-(d + 2(\beta(x_0) - 1))}\mathcal{J}_{x_0}(u,B_r(x_0)) - \frac{1}{2}\beta(x_0)r^{-(d + 2\beta(x_0) -1)}\int_{\partial B_r(x_0)}u^2\, d\mathcal{H}^{d-1},
\]
is almost monotone, meaning that there exists a nonnegative real function $g \in L^1(0,2)$ such that
\[
	\frac{d}{dr} \mathcal{A}_{u,x_0}(r) \geq -g(r) \quad \text{for} \quad r \in (0,1/2).
\]
\end{theorem}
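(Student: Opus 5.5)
We follow Weiss's original argument for the frozen functional $\mathcal{J}_{x_0}$ and treat every deviation caused by the variable coefficients as an error term, controlled through the growth assumption \eqref{assumptions: growth for min}. Take $x_0=0$ and write $\beta=\beta(x_0)$, $\gamma_0=\gamma(x_0)$, $\delta_0=\delta(x_0)$. We use the rescalings $u_r(x)=r^{-\beta}u(rx)$. A change of variables gives
\[
\mathcal{A}_{u,0}(r)=\mathcal{J}_{0}(u_r,B_1)-\tfrac{\beta}{2}\int_{\partial B_1}u_r^2\,d\mathcal{H}^{d-1},
\]
because $\mathcal{J}_0$ with $A(0)=I_d$ scales exactly with exponent $d+2(\beta-1)$. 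We will use the identity $\gamma_0\beta=2(\beta-1)$.

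Differentiating in $r$, for a.e. $r$ we get
\[
\tfrac{d}{dr}\mathcal{A}=r^{-(d+2\beta-1)}\Big(\int_{\partial B_r}\big(\tfrac12|\nabla u|^2+\delta_0u^{\gamma_0}\big)-\tfrac{d+2\beta-2}{r}\mathcal{J}_0(u,B_r)-\beta\int_{\partial B_r}\tfrac{u\,\partial_\nu u}{r}+\ldots\Big).
\]
In the classical proof, the bulk term $\mathcal{J}_0(u,B_r)$ is rewritten using the homogeneous extension
\[
h(x)=\big(|x|/r\big)^{\beta}u(rx/|x|)
\]
and minimality of $u$. The result is
\[
\tfrac{d}{dr}\mathcal{A}\ \ge\ r^{-(d+2\beta)}\int_{\partial B_r}\big(x\cdot\nabla u-\beta u\big)^2\ \ge 0,
\]
where one uses that $\mathcal{J}_0(h,B_r)$ is explicitly computable from boundary data:
\[
\mathcal{J}_0(h,B_r)=\tfrac{r}{d+2\beta-2}\int_{\partial B_r}\big(\tfrac12(|\nabla_\tau u|^2+\beta^2u^2/r^2)+\delta_0u^{\gamma_0}\big).
\]
So the plan is to prove
\[
\mathcal{J}_0(u,B_r)\le \mathcal{J}_0(h,B_r)+E(r),
\qquad r^{-(d+2\beta-1)}E(r)\le g(r)\in L^1 .
\]

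The key chain of comparisons is
\[
\mathcal{J}_0(u,B_r)\le \mathcal{J}(u,B_r)+e_1\le (1+\varrho(r))\mathcal{J}(h,B_r)+e_1\le (1+\varrho(r))\big(\mathcal{J}_0(h,B_r)+e_2\big)+e_1 .
\]
We will estimate each error using the growth bound $u\lesssim r^\beta$ and $|\nabla u|\lesssim r^{\beta\gamma_0/2}=r^{\beta-1}$ in $B_r$. The function $h$ satisfies the same bounds, since it inherits them from $u|_{\partial B_r}$.

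The matrix error is $\int_{B_r}|A(x)-I||\nabla u|^2\lesssim \omega_A(r)r^{d+2\beta-2}$, and similarly for $h$. The error in $\delta$ is $\omega_\delta(r)\int_{B_r}u^{\gamma(x)}$. The almost-minimality error is $\varrho(r)\mathcal{J}_0(h,B_r)\lesssim\varrho(r)r^{d+2\beta-2}$.

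The most delicate error comes from $\gamma$. We need $|u^{\gamma(x)}-u^{\gamma_0}|\le |\gamma(x)-\gamma_0|\,|\ln u|\,\max(u^{\gamma(x)},u^{\gamma_0})$. This requires care where $u$ is small, and there are two issues:
\begin{itemize}
\item $u^{\gamma(x)}$ may exceed $r^{\beta\gamma_0}$ when $\gamma(x)<\gamma_0$. This is controlled because $u^{\gamma(x)-\gamma_0}\le r^{-\beta\omega_\gamma(r)}$ and $r^{-\omega_\gamma(r)}$ stays bounded when $\omega_\gamma(r)|\ln r|$ is bounded, which Dini-type control gives.
\item $t^{\gamma}|\ln t|\lesssim t^{\gamma}|\ln r|$ fails only in the region $u\ll r^{\beta}$. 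There we instead use $t^{\gamma}|\ln t|\lesssim_\epsilon t^{\gamma-\epsilon}$, combined with the above, or split into $\{u\ge r^{2\beta}\}$ and its complement, where the contribution is $O(r^{d+2\beta-2}\cdot r)$.
\end{itemize}
Together these give an error $\lesssim |\ln r|\,\omega_\gamma(r)\,r^{d+2\beta-2}$.

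Dividing by $r^{d+2\beta-1}$, the total error becomes
\[
g(r)\lesssim \frac{\omega_A(r)+\omega_\delta(r)+\varrho(r)+|\ln r|\,\omega_\gamma(r)}{r},
\]
which is integrable precisely by the Dini hypothesis. Adding the nonnegative square term gives $\frac{d}{dr}\mathcal{A}\ge -g$.

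I expect the main obstacle to be the $\gamma$-comparison, uniformly on the set where $u$ is tiny, together with ensuring that $h$ obeys the growth bounds needed there. If $u^{\gamma(x)}$ cannot be bounded pointwise, the fallback is to integrate over the level-set splitting described above.
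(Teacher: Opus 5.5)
Your proposal is correct and follows essentially the same route as the paper. You freeze the coefficients at $x_0$, use the $\beta$-homogeneous extension of $u|_{\partial B_r}$ as competitor (your explicit formula for $\mathcal{J}_0(h,B_r)$ is the paper's identity $\mathcal{B}(u)=\mathcal{B}(w)=\kappa_r\mathcal{J}_0(w,B_r)$), chain the frozen/variable/almost-minimality comparisons, and bound each error with the growth assumption to get $g\lesssim(\omega_A+\omega_\delta+\varrho+|\ln r|\,\omega_\gamma)/r$.

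One slip needs fixing in the $\gamma$-term: $u^{\gamma(x)-\gamma_0}\le r^{-\beta\omega_\gamma(r)}$ is false where $u\ll r^{\beta}$ and $\gamma(x)<\gamma_0$. Use $u^{\gamma(x)}\le (Cr^{\beta})^{\gamma(x)}$ instead. Also, since $t\mapsto t^{\xi}|\ln t|$ is increasing on $(0,e^{-1/\xi})$ with $\xi\ge\min\gamma$, the bound $u\le Cr^{\beta}$ already gives the $|\ln r|$ factor, so no level-set splitting is needed.
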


\begin{proof}
Without loss of generality, assume $x_0=0$ and set
\[
    \beta_0 \coloneqq \beta(0), \qquad \gamma_0 \coloneqq \gamma(0).
\]
A direct computation yields the following expression for $\frac{d}{dr}\mathcal{A}_{u,0}(r)$
\begin{align*}
& -(d+2(\beta_0-1))\,r^{-(d+2\beta_0-1)}\mathcal{J}_{0}(u,B_r)
   + r^{-(d+2(\beta_0-1))}\mathcal{J}_{0}(u,\partial B_r) \\
&\quad - \beta_0\,r^{-(d+2\beta_0)}
\int_{\partial B_r}\bigl( r\,u\,\partial_\nu u - \beta_0 u^2 \bigr)\,d\mathcal{H}^{d-1},
\end{align*}
where, with a slight abuse of notation,
\[
    \mathcal{J}_{0}(u,\partial B_r) \coloneqq \int_{\partial B_r}\left(\frac{|\nabla u|^2}{2} + \delta(0)\,u^{\gamma_0}\right)\,d\mathcal{H}^{d-1}.
\]
On $\partial B_r$, we decompose the gradient into normal and tangential parts,
\[
    \nabla u = (\partial_\nu u)\,\nu + \nabla_\tau u,
\]
and we introduce
\begin{align*}
\mathcal{B}(u) \coloneqq\;
& r^{-(d+2(\beta_0-1))}\int_{\partial B_r}\left(\frac{|\nabla_\tau u|^2}{2}+\delta(0)\,u^{\gamma_0}\right)\,d\mathcal{H}^{d-1} \\
&\;+\frac{1}{2}\beta_0^2\,r^{-(d+2\beta_0)}\int_{\partial B_r}u^2\,d\mathcal{H}^{d-1}.
\end{align*}
With this notation, we can rewrite
\begin{equation}\label{eq:derivative-formula}
    \frac{d}{dr}\mathcal{A}_{u,0}(r) = \mathcal{B}(u) - (d+2(\beta_0-1))\,r^{-(d+2\beta_0-1)}\mathcal{J}_{0}(u,B_r) + \mathcal{H}_r(u),
\end{equation}
where the boundary term driving homogeneity is
\[
    \mathcal{H}_r(u) \coloneqq \frac{1}{2}\,r^{-(d+2(\beta_0-1))}
    \int_{\partial B_r}\left(\partial_\nu u - \beta_0 r^{-1}u\right)^2\,d\mathcal{H}^{d-1}.
\]
Now consider the $\beta_0$-homogeneous extension of $u$ from $\partial B_r$ into $B_r$, defined by
\[
    w(x) \coloneqq \left(\frac{|x|}{r}\right)^{\beta_0} u\left(r\frac{x}{|x|}\right)
    \quad \text{for } x\neq 0,
    \qquad
    w(0)\coloneqq 0.
\]
Since $w$ is $\beta_0$-homogeneous with respect to the origin, its Weiss functional is constant. Therefore, using \eqref{eq:derivative-formula}, we have
\[
    0 = \frac{d}{ds}\mathcal{A}_{w,0}(s)\Big|_{s=r}
    = \mathcal{B}(w) -(d+2(\beta_0-1))\,r^{-(d+2\beta_0-1)}\mathcal{J}_{0}(w,B_r)+\mathcal{H}_r(w).
\]
Moreover, by homogeneity, we have $\partial_\nu w=\beta_0 r^{-1}w$ on $\partial B_r$, hence $\mathcal{H}_r(w)=0$, and consequently
\[
    \mathcal{B}(w)=(d+2(\beta_0-1))\,r^{-(d+2\beta_0-1)}\mathcal{J}_{0}(w,B_r).
\]
Since $u=w$ on $\partial B_r$, it follows that $\mathcal{B}(u)=\mathcal{B}(w)$, and thus
\[
    \mathcal{B}(u)=(d+2(\beta_0-1))\,r^{-(d+2\beta_0-1)}\mathcal{J}_{0}(w,B_r).
\]
Finally, we split the frozen energy of $w$ as
\[
    \mathcal{J}_{0}(w,B_r)=\mathcal{J}(w,B_r)+\bigl(\mathcal{J}_{0}(w,B_r)-\mathcal{J}(w,B_r)\bigr),
\]
and obtain
\[
    \mathcal{B}(u)=(d+2(\beta_0-1))\,r^{-(d+2\beta_0-1)} \Big(\mathcal{J}(w,B_r)+\mathcal{J}_{0}(w,B_r)-\mathcal{J}(w,B_r)\Big).
\]
Since $u$ is an almost minimizer and $w$ is an admissible competitor, we have
\[
    \mathcal{J}(u,B_r)\le (1+\varrho(r))\,\mathcal{J}(w,B_r).
\]
Using that $\mathcal{J}\ge 0$ and the elementary inequality
\[
    \frac{1}{1+t}\ge 1-t \quad\text{for every } t>0,
\]
we infer
\[
    (1-\varrho(r))\,\mathcal{J}(u,B_r)\le \mathcal{J}(w,B_r).
\]
Consequently,
\[
    \mathcal{B}(u)
    \ge \kappa_r(1-\varrho(r))\,\mathcal{J}(u,B_r)
    + \kappa_r\bigl(\mathcal{J}_0(w,B_r)-\mathcal{J}(w,B_r)\bigr),
\]
where
\[
    \kappa_r \coloneqq (d+2(\beta_0-1))\,r^{-(d+2\beta_0-1)}.
\]
Writing 
\[
	\mathcal{J}(u,B_r) = \mathcal{J}_0(u,B_r) + \frac{1}{2}\int_{B_r} \nabla u \cdot (A(x) - I)\nabla u + \int_{B_r}\delta(x)u^{\gamma(x)} - \delta(0)u^{\gamma_0},
\]
we get
\begin{align*}
\mathcal{B}(u) \ge {}&
\kappa_r\,\mathcal{J}_0(u,B_r)
-\kappa_r\,\varrho(r)\,\mathcal{J}(u,B_r)
\\
&+\kappa_r \int_{B_r}\Big[
\delta(x)\big(u^{\gamma(x)}-w^{\gamma(x)}\big)
+\delta(0)\big(w^{\gamma_0}-u^{\gamma_0}\big)
\Big]\,dx
\\
&+\frac{1}{2}\kappa_r \int_{B_r}\Big[
\nabla u\cdot(A(x)-I)\nabla u
-\nabla w\cdot(A(x)-I)\nabla w
\Big]\,dx.
\end{align*}
Using this to estimate the derivative of $\mathcal{A}_{u,0}(r)$ and reorganizing terms, we obtain
\begin{align*}
    \frac{d}{dr} \mathcal{A}_{u,0}(r) & \geq \mathcal{H}_r(u) - \kappa_r \varrho(r)\mathcal{J}(u,B_r) + \kappa_r \int_{B_r}\left(\delta(0)w^{\gamma_0} - \delta(x)w^{\gamma(x)} \right)\,dx\\
    & \quad - \kappa_r \int_{B_r}\Big(\delta(0)u^{\gamma_0} - \delta(x)u^{\gamma(x)} \Big)\,dx\\
    & \quad + \frac{1}{2}\kappa_r \bigg(\int_{B_r} \nabla u \cdot (A(x) - I)\nabla u - \int_{B_r} \nabla w \cdot (A(x) - I)\nabla w\bigg).
\end{align*}
We now estimate the error terms on the right-hand side. By \eqref{assumptions: growth for min},
\begin{align*}
\mathcal{J}(u,B_r)
&\coloneqq \int_{B_r}\bigg(\frac12\,\langle A(x)\nabla u,\nabla u\rangle + \delta(x)\,u^{\gamma(x)}\bigg)\,dx \\
&\le C\,\mathcal{H}^d(B_r)\bigl(r^{\beta_0\gamma_0}+r^{\beta_0\min_{B_r}\gamma}\bigr)
\lesssim r^{d+\beta_0\gamma_0}.
\end{align*}
In the last step we used the continuity of $\gamma$ to compare $r^{\beta_0\min_{B_r}\gamma}$ with
$r^{\beta_0\gamma_0}$ (see \cite[Proof of Theorem 4.1]{ASTU1} for the detailed argument). Since
$\beta_0\gamma_0=2(\beta_0-1)$, we obtain
\[
    r^{-(d+2\beta_0-1)}\,\varrho(r)\,\mathcal{J}(u,B_r)\lesssim \frac{\varrho(r)}{r}.
\]
Next, note that
\[
    \underbrace{\delta(0)u^{\gamma_0}-\delta(x)u^{\gamma(x)}}_{\mathcal{E}_{u,0}(x)}
    = (\delta(0)-\delta(x))u^{\gamma_0} + \delta(x)\bigl(u^{\gamma_0}-u^{\gamma(x)}\bigr).
\]
Thus, for $x\in B_r$, using again \eqref{assumptions: growth for min} we estimate
\[
    |\mathcal{E}_{u,0}(x)| \le C\bigl(\omega_\delta(r)+\omega_\gamma(r)|\ln r|\bigr)\,r^{\beta_0\gamma_0},
\]
where we used the mean value theorem applied to the function $s \mapsto t^s$. Likewise,
\[
    |\mathcal{E}_{w,0}(x)| \le C\bigl(\omega_\delta(r)+\omega_\gamma(r)|\ln r|\bigr)\,r^{\beta_0\gamma_0}.
\]
Moreover, by the continuity assumption on $A$ and \eqref{assumptions: growth for min} once more, we have
\[
	\frac{1}{2}\kappa_r \left(\int_{B_r} \nabla u \cdot (A(x) - I)\nabla u - \int_{B_r} \nabla w \cdot (A(x) - I)\nabla w\right) \geq -C \frac{\omega_{A}(r)}{r}.
\]
Therefore,
\[
    \frac{d}{dr} \mathcal{A}_{u,0}(r) \geq \mathcal{H}_r(u) - C\underbrace{\frac{\left(\omega_A(r) + \varrho(r) + \omega_\delta(r) + \omega_\gamma(r)|\ln(r)|\right)}{r}}_{\coloneqq g(r)}.
\]
The assumptions on $\delta$, $\gamma$, $A$ and on the gauge function implies that $g \in L^1(0,2)$.
\end{proof}

\begin{remark}
The assumption $g\in L^1$ is needed to guarantee that the limit
\[
    \lim_{r\to 0^+}\mathcal{A}_{u,z_0}(r)
\]
exists. Indeed, if the derivative satisfies only the borderline bound
\[
    f'(r)\gtrsim -r^{-1},
\]
then $f(r)+\ln r$ has a one-sided limit as $r\to 0^+$, but $f(r)$ itself may still diverge to $+\infty$.
\end{remark}

As a corollary of the Weiss-type almost monotonicity formula, every blow-up limit of a local minimizer of \eqref{def:main functional} (that is, in the case $\varrho\equiv 0$) is homogeneous.

\begin{corollary}
Let $u \geq 0$ be a local minimizer to the functional \eqref{def:main functional} in $B_2$. Assume further that
\[
	\omega_A(\cdot) + \omega_\delta(\cdot) + |\ln(\cdot)|\omega_\gamma(\cdot) \in C^{0,dini} ([0,2]).
\]
The blow-up of $u$ at $x_0 \in F(u)\cap B_{1}$ is $\beta(x_0)$-homogeneous with respect to $x_0$.
\end{corollary}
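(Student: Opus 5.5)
The plan is the standard Weiss argument, run with the almost monotonicity of Theorem~\ref{thm:almost-monotonicity} in place of exact monotonicity.

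\textbf{Normalization.} First I reduce to $A(x_0)=I_d$. Since $A(x_0)$ is symmetric and uniformly elliptic, the linear change of variables $x\mapsto x_0+A(x_0)^{1/2}(x-x_0)$ turns $u$ into a local minimizer of a functional of the same form. Its coefficients have comparable moduli of continuity, its free boundary point is still $x_0$, and the growth \eqref{assumptions: growth for min} holds with the same exponent. Homogeneity is preserved by linear maps, so it suffices to treat this case. I also take $x_0=0$, and note that $\varrho\equiv 0$ here.

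\textbf{Existence of the limit.} The proof of Theorem~\ref{thm:almost-monotonicity} actually gives the sharper inequality
\[
\tfrac{d}{dr}\mathcal{A}_{u,0}(r)\ge \mathcal{H}_r(u)-Cg(r), \qquad g\in L^1(0,2),
\]
so $r\mapsto \mathcal{A}_{u,0}(r)+C\int_0^r g$ is nondecreasing. By \eqref{assumptions: growth for min}, $\mathcal{J}_0(u,B_r)\lesssim r^{d+2(\beta_0-1)}$ and $\int_{\partial B_r}u^2\lesssim r^{d-1+2\beta_0}$, so $\mathcal{A}_{u,0}$ is bounded on $(0,1/2)$. Hence the limit $\mathcal{A}_{u,0}(0^+)$ exists and is finite. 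Integrating the inequality also gives
\[
\int_0^{1/2}\mathcal{H}_r(u)\,dr<\infty .
\]

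\textbf{Scaling.} A change of variables shows $\mathcal{A}_{u,0}(rs)=\mathcal{A}^{(r)}_{u_r,0}(s)$, where $u_r=r^{-\beta_0}u(r\cdot)$ and $\mathcal{A}^{(r)}$ is the Weiss functional for the frozen coefficients $\delta(0)$, $\gamma_0$. Similarly,
\[
\int_{r\rho}^{r\sigma}\mathcal{H}_t(u)\,dt=\int_\rho^\sigma \tfrac12 s^{-(d+2(\beta_0-1))}\int_{\partial B_s}\big(\partial_\nu u_r-\beta_0 s^{-1}u_r\big)^2\,ds .
\]
The right-hand side is the integral of $\mathcal{H}$ over $(r\rho,r\sigma)$, which tends to $0$ as $r\to0$ by integrability. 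Writing it in polar coordinates, this says
\[
\int_{B_\sigma\setminus B_\rho}|x|^{-(d+2\beta_0)}\big(x\cdot\nabla u_{r_k}-\beta_0u_{r_k}\big)^2\,dx\to 0 .
\]

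\textbf{Passing to the blow-up.} Along $r_k\to0$ with $u_{r_k}\to u_0$, the growth assumption gives uniform $C^{1}$-type bounds on $u_{r_k}$: gradients of order $r^{\beta_0-1}$ rescale to $O(1)$. After a further subsequence, $\nabla u_{r_k}\to\nabla u_0$ weakly in $L^2_{loc}$, and in fact locally uniformly by equicontinuity from the interior estimates. Passing to the limit in the annular integral yields
\[
x\cdot\nabla u_0=\beta_0u_0 \quad \text{a.e. in } B_\sigma\setminus B_\rho,\ \text{for all } 0<\rho<\sigma .
\]
This Euler relation integrates along rays to $u_0(\lambda x)=\lambda^{\beta_0}u_0(x)$.

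\textbf{Main obstacle.} The delicate step is justifying the strong convergence of gradients needed in that limit. Weak lower semicontinuity alone suffices here: the integrand is a convex quadratic in $\nabla u$ plus a term converging strongly in $L^2$, so the annular integral for $u_0$ is at most the $\liminf$, which is $0$. That is enough and avoids the need for strong convergence. A secondary point is checking that the change of variables in the normalization preserves the Dini-type hypothesis; this is routine, since the moduli change only by constants.
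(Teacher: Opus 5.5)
Your proof is correct, but it follows a different route from the paper's.

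The paper's route: after the same normalization $A(0)=I_d$, it passes the Weiss energy itself to the limit, $\mathcal{A}_{v_0,0}(s)=\lim_j\mathcal{A}_{v,0}(sr_j)=\mathcal{A}_{v,0}(0^+)$, and deduces that $\mathcal{A}_{v_0,0}$ is constant. It then reruns the comparison computation of Theorem~\ref{thm:almost-monotonicity} on $v_0$. By Lemma~\ref{lemma:compactness}, $v_0$ minimizes the frozen functional, so $g\equiv 0$ and $\mathcal{H}_s(v_0)=0$.

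Your route: you keep the defect term $\mathcal{H}_r(u)$ produced in the proof of the theorem and use its integrability on $(0,1/2)$. Together with the scaling $\mathcal{H}_{rs}(u)=r^{-1}\mathcal{H}_s(u_r)$, which checks out, this shows that the weighted $L^2$ norm of $x\cdot\nabla u_r-\beta_0u_r$ on each fixed annulus tends to zero. You then conclude by weak lower semicontinuity.

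What your route buys is economy of hypotheses. You only use the uniform Lipschitz bound on $u_r$ coming from \eqref{assumptions: growth for min} and weak-$\ast$ convergence of gradients. You never need to know that the blow-up minimizes anything. By contrast, the paper's identity $\mathcal{A}_{v_0,0}(s)=\lim_j\mathcal{A}_{v_{r_j},0}(s)$ requires strong $L^2_{loc}$ convergence of $\nabla v_{r_j}$, which comes from the $C^{1,\alpha}$ theory and not from the growth bound alone. It also requires identifying $v_0$ through Lemma~\ref{lemma:compactness}.

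What the paper's route gives in return is the equality $\mathcal{A}_{v_0,0}\equiv\mathcal{A}_{v,0}(0^+)$, i.e.\ the blow-up carries the limiting Weiss density. From lower semicontinuity alone you would only get $\mathcal{A}_{u_0,0}(s)\le\mathcal{A}_{u,0}(0^+)$. The full equality is what the later energy-gap and openness arguments rely on (Proposition~\ref{prop:energy-gap} and the corollary after it).

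One small point: your aside that $\nabla u_{r_k}$ converges locally uniformly ``by equicontinuity from the interior estimates'' is not supported by the growth bound alone. As you note yourself, though, it is not needed.
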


\begin{proof}
We may assume $x_0=0$. Since $A_0\coloneqq A(0)$ is uniformly elliptic, it is invertible and admits a (positive definite) square root. Let $L\coloneqq A_0^{1/2}$, so that
\[
    L^{-1}A_0L^{-T}=I_d.
\]
Define the linear change of variables and the rescaled function
\[
    v(x)\coloneqq u(Lx).
\]
A standard change-of-variables argument shows that $v$ is a local minimizer of the transformed functional
\[
    \int \bigg(\frac12\langle \widetilde A(x)\nabla w,\nabla w\rangle+\widetilde\delta(x)\,w^{\widetilde\gamma(x)}\bigg)\,dx,
\]
again in $B_2$, where
\[
    \widetilde A(x)\coloneqq L^{-1}A(Lx)L^{-T},\qquad
    \widetilde\delta(x)\coloneqq \delta(Lx),\qquad
    \widetilde\gamma(x)\coloneqq \gamma(Lx).
\]
In particular, $\widetilde A(0)=I_d$, and the coefficients $(\widetilde A,\widetilde\delta,\widetilde\gamma)$ satisfy the same structural assumptions as $(A,\delta,\gamma)$ in Theorem~\ref{thm:almost-monotonicity}.

Consider the blow-up family at the origin,
\[
    v_r(x)\coloneqq r^{-\beta(0)}v(rx),
\qquad r\in(0,1).
\]
By compactness, along a sequence $r_j\downarrow 0$ we have $v_{r_j}\to v_0$ locally in $\mathbb{R}^n$, where
$v_0\not\equiv 0$ is a local minimizer of the frozen Alt--Phillips functional $\mathcal{J}_0$ at the origin; see Lemma~\ref{lemma:compactness}.

Fix $s>0$. Using the scaling of the Weiss functional and the convergence of $v_{r_j}$, we obtain
\[
    \mathcal{A}_{v_0,0}(s)
    =\lim_{j\to\infty}\mathcal{A}_{v_{r_j},0}(s)
    =\lim_{j\to\infty}\mathcal{A}_{v,0}(sr_j)
    =\mathcal{A}_{v,0}(0^+),
\]
where the last equality follows from Theorem~\ref{thm:almost-monotonicity}. Hence
$\mathcal{A}_{v_0,0}(s)$ is constant in $s>0$.

At this stage, one may invoke the classical classification of homogeneous minimizers for the constant-parameter Alt--Phillips problem. Alternatively, revisiting the computation leading to \eqref{eq:derivative-formula}, the constancy of $\mathcal{A}_{v_0,0}$ yields, for every $s>0$,
\[
    \mathcal{B}(v_0)
    -\bigl(d+2(\beta_0-1)\bigr)s^{-(d+2\beta_0-1)}\mathcal{J}_0(v_0,B_s)
    +\mathcal{H}_s(v_0)=0.
\]
Moreover, since $v_0$ minimizes $\mathcal{J}_0$, the same comparison argument used in the proof of
Theorem~\ref{thm:almost-monotonicity} gives
\[
    \mathcal{B}(v_0)
    -\bigl(d+2(\beta_0-1)\bigr)s^{-(d+2\beta_0-1)}\mathcal{J}_0(v_0,B_s)
    \ge -g(s).
\]
In the constant-parameter case we have $g\equiv 0$, and therefore $\mathcal{H}_s(v_0)=0$. Recalling the definition of $\mathcal{H}_s$, we conclude that for every $s>0$,
\[
    \int_{\partial B_s}\bigl(\partial_\nu v_0-\beta_0 s^{-1}v_0\bigr)^2\,d\mathcal{H}^{d-1}=0,
\]
hence
\[
    \partial_\nu v_0=\beta_0 s^{-1}v_0 \quad \text{on }\partial B_s.
\]
An ODE argument along rays then gives
\[
    v_0(\lambda x)=\lambda^{\beta_0}v_0(x)
\quad \text{for all }\lambda>0\text{ and }x\in\mathbb{R}^n.
\]
\end{proof}

\begin{remark}
    
\end{remark}

\section{The Alt--Philips problem with varying coefficients}\label{sct:Alt-philips-varying}

In this section, we revisit fine regularity properties of the free boundary of nonnegative local minimizers of the so-called `Alt--Philips problem with varying coefficients'
\begin{equation}\label{functional:alt_philips-varying}
	\mathcal{J}_\delta^\gamma(u) \coloneqq \int \left( \frac{|\nabla u|^2}{2} + \delta(x)\,u^{\gamma(x)} \right)\,dx.
\end{equation}
Local minimizers to this functional were studied in detail in the recent work \cite{ASTU1}, in which the main result is that if
\[
    \delta, \gamma \in W^{1,q}_{loc} \quad \text{for} \quad q>d,
\]
then the free boundary $F(u)\cap B_1$ is locally a $C^{1,\gamma}$ surface up to a negligible set $\Sigma(u)$ of Hausdorff dimension less than or equal to $d-3$. Moreover, if 
\[
    \delta, \gamma \in W^{2,\infty}_{loc},
\]
then $\mathcal{H}^{d-1}(F(u)\cap B_r) \lesssim r^{d-1}$.

At first glance, both assumptions may appear to impose an excessively high degree of regularity; however, they arise naturally from classical methods. The standard derivation of the Weiss monotonicity formula requires differentiating the functional once, while the derivation of Hausdorff measure estimates for the Alt--Phillips problem requires differentiating the equation twice. In this framework, both assumptions are essentially sharp, as they are needed to control the error terms generated by the derivatives of \(\gamma\) and \(\delta\). The reasoning here in this section allows us to significantly reduce these regularity assumptions to
\begin{equation}\label{assump:opt-regularity-exponents}
     \delta \in C^{\mathrm{dini}}_{loc} \qquad \text{and} \qquad \gamma \in C^{\log-dini}_{loc}.
\end{equation}

The main building blocks to that end are that
\begin{itemize}
    \item[(I)] $\mathcal{R}(u)$ is locally open in $F(u)$;
    \item[(II)] Flatness implies regularity;
    \item[(III)] Dimension reduction argument,
\end{itemize}
which we prove in the forthcoming subsections. Item~\((I)\) is essentially a consequence of the Weiss monotonicity formula. Item~\((II)\) was established, in essence, in \cite[Section~7]{ASTU1}. In that work, we assumed that both \(\delta\) and \(\gamma\) were H\"older continuous, as a consequence of the \(W^{1,q}\) assumption with \(q>d\). However, the argument only requires that, after scaling, both \(\delta\) and \(\gamma\) become almost constant. We also emphasize that, within the positivity set, the solution is classical in view of the continuity assumptions on \(\delta\) and \(\gamma\), together with Kovats' borderline regularity results \cite{JK1}. Item~\((III)\) likewise follows from the Weiss monotonicity formula and the translation invariance of homogeneous blow-ups. We summarize these conclusions in the following theorem.

\begin{theorem}\label{thm:main-thm-varying-AP}
Let $u \geq 0$ be a local minimizer to the energy-functional \eqref{functional:alt_philips-varying} in $B_2$. Assume further that Assumption \eqref{assump:opt-regularity-exponents} is in force. Then, $\mathcal{R}(u)$ is locally $C^{1,\alpha}$, and
\[
	\mathrm{dim}_\mathcal{H}\left(\Sigma(u)\right) \le d-3.
\]
Moreover, $\mathcal{H}^{d-1}(F(u)\cap B_r) \lesssim r^{d-1}$, for $r \leq 1$.
\end{theorem}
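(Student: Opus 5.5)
The plan is to follow the three building blocks (I)--(III) listed above, with the Weiss-type almost monotonicity formula of Theorem~\ref{thm:almost-monotonicity} and its corollary as the main tool. Here $A\equiv I_d$ and $\varrho\equiv 0$. The assumption \eqref{assump:opt-regularity-exponents} says exactly that $\omega_\delta$ and $|\ln(\cdot)|\,\omega_\gamma$ are Dini. So the error function $g$ is integrable, the limit $\mathcal{A}_{u,x_0}(0^+)$ exists at every $x_0\in F(u)\cap B_1$, and every blow-up is $\beta(x_0)$-homogeneous. The growth bound \eqref{assumptions: growth for min} and nondegeneracy are taken from \cite{ASTU1,ASTU2}; their proofs use only continuity of the coefficients.

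\textbf{Step (I): openness of $\mathcal{R}(u)$ and upper semicontinuity of the density.} First I show that $x_0\mapsto \mathcal{A}_{u,x_0}(0^+)$ is upper semicontinuous on $F(u)$. For fixed $r$, the map $x_0\mapsto \mathcal{A}_{u,x_0}(r)$ is continuous, since $\delta(x_0)$, $\gamma(x_0)$ and $\beta(x_0)$ enter continuously. The almost monotonicity gives
\[
\mathcal{A}_{u,x_0}(0^+)\le \mathcal{A}_{u,x_0}(r)+\int_0^r g_{x_0}.
\]
This needs $g_{x_0}$ to be controlled uniformly in $x_0$, which holds because the constants in the proof of Theorem~\ref{thm:almost-monotonicity} are universal and the moduli are global. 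Next I identify the density of a trivial cone for the frozen problem and show that it is the minimal possible value $\theta(x_0)$. I also show that nontrivial homogeneous minimal cones have density at least $\theta(x_0)+\eta$, with $\eta$ uniform for $\gamma(x_0)$ in a compact subset of $(0,1)$. This gap is proved by compactness, using Lemma~\ref{lemma:compactness} with frozen constant coefficients. Here $\theta$ depends continuously on $x_0$. Combining upper semicontinuity with the gap, nearby free boundary points of a regular point have density close to $\theta$, so their blow-ups are trivial cones. This gives (I).

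\textbf{Step (II): flatness implies regularity.} Near a regular point, after the rescaling $u_r$ at scale $r$, the coefficients $\delta(x_0+r\,\cdot)$ and $\gamma(x_0+r\,\cdot)$ are uniformly close to constants, with oscillation $\omega_\delta(r)+\omega_\gamma(r)|\ln r|\to0$. Here the $|\ln r|$ factor accounts for the difference between $r^{\gamma(x)}$ and $r^{\gamma_0}$ that appears in the scaling. The improvement-of-flatness scheme of \cite[Section~7]{ASTU1} then applies verbatim. The only change is that the geometric decay of flatness picks up an extra error $\sum_k \omega(r_02^{-k})$ from the coefficients. This sum is finite precisely under the Dini and log-Dini assumptions. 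The result is a $C^{1}$ free boundary with a Dini-type modulus, and it is $C^{1,\alpha}$ when the moduli are Hölder-like. To claim $C^{1,\alpha}$ in general, I would track the decay rate of flatness, $\epsilon_{k+1}\le \tfrac12\epsilon_k+C\omega(r_02^{-k})$. Interior regularity in $\{u>0\}$ comes from Kovats' results \cite{JK1}. I expect this step to be the main obstacle: one has to check that the Harnack and viscosity arguments tolerate a non-Hölder perturbation of the right-hand side $\delta\gamma u^{\gamma-1}$, which is singular near $F(u)$.

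\textbf{Step (III): dimension reduction and the measure estimate.} For $\Sigma(u)$, I would run Federer's dimension reduction. Blow-ups at singular points are homogeneous nontrivial minimal cones of the frozen problem, and by Lemma~\ref{lemma:compactness} limits of singular points remain singular. The upper semicontinuity from (I) together with the homogeneity lets me blow up again at points of the cone, which yields translation invariance. This reduces the problem to the constant-coefficient classification in \cite{ASTU1}, which has no singular cones in dimension $\le 2$, and hence $\dim_{\mathcal H}\Sigma(u)\le d-3$. For $\mathcal H^{d-1}(F(u)\cap B_r)\lesssim r^{d-1}$, I would avoid differentiating the equation twice. Instead I would use nondegeneracy and optimal growth to get a uniform positive density of both $\{u>0\}$ and $\{u=0\}$ in balls centred on $F(u)$. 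I would combine this with the estimate $\int_{B_r(x_0)}\Delta u\approx r^{d-2+\beta}$, which follows from testing the equation against cutoffs and using the growth bound. A covering argument then yields the estimate. This is the ``distinct argument'' route, and it needs only continuity of the coefficients.
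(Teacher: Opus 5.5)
Your Steps (I)--(III) follow the paper's architecture. Step (I) gets openness of $\mathcal{R}(u)$ from the energy gap, continuity of $x_0\mapsto\mathcal{A}_{u,x_0}(r)$, and a uniform error $g$. Step (II) reruns \cite[Section~7]{ASTU1} with coefficients that are almost constant after scaling. Your caveat there, that merely Dini moduli give only a Dini-type modulus for the normal rather than a H\"older one, is a point the paper's argument does not address either. In Step (III), your Federer reduction plays the role of the paper's covering argument in Proposition~\ref{lemma:singular-set-blow_up-to-minimizer}. One correction: the fact that singular points of the rescalings accumulate only at singular points of the blow-up does not follow from Lemma~\ref{lemma:compactness}, which gives only Hausdorff convergence of free boundaries. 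The paper obtains it by applying flatness implies regularity to $u_k$ near regular points of the blow-up. Your upper semicontinuity plus the gap also works, once made uniform along the rescaled sequence. The genuine gap is the final claim $\mathcal{H}^{d-1}(F(u)\cap B_r)\lesssim r^{d-1}$.

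The route you propose cannot give a $(d-1)$-dimensional bound. By the gradient part of \eqref{assumptions: growth for min}, $|\nabla u|\lesssim r^{\beta-1}$ on $B_r(x)$ for $x\in F(u)$, so
\[
\int_{B_r(x)}\Delta u=\int_{\partial B_r(x)}\partial_\nu u\,d\mathcal{H}^{d-1}\lesssim r^{d-1}\cdot r^{\beta-1}=r^{d-2+\beta}.
\]
Since $\gamma>0$ forces $\beta=\frac{2}{2-\gamma}>1$, this is $o(r^{d-1})$. In the Bernoulli case $\gamma=0$, $\Delta u$ is comparable to $\mathcal{H}^{d-1}$ restricted to $F(u)$; here $\Delta u$ carries no $(d-1)$-dimensional information. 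A Vitali covering by disjoint balls $B_{r_i}(x_i)$ with $x_i\in F(u)$, combined with your lower bound, yields only $\sum_i r_i^{d-2+\beta}\lesssim\int_{B_2}\Delta u$. That is a bound on $\mathcal{H}^{d-2+\beta}$ with $d-2+\beta>d-1$, strictly weaker than the claim.

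Two-sided density bounds do not help either. $N$ disjoint $\varepsilon$-balls centred on $F(u)\cap B_1$, each containing $\gtrsim\varepsilon^d$ of both phases, only give $N\lesssim\varepsilon^{-d}$. Snowflake-type domains have boundaries of dimension $>d-1$ although both phases have positive density. A direct count would need an extra thin-layer estimate such as $|\{0<u<\varepsilon^{\beta}\}\cap B_1|\lesssim\varepsilon$, the analogue of $|\{0<|\nabla u|<\varepsilon\}|\lesssim\varepsilon$ in the obstacle problem. Together with density this would give $N\lesssim\varepsilon^{1-d}$. However, the classical proofs of such estimates differentiate the equation, which is exactly what costs derivatives of $\delta$ and $\gamma$. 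So your assertion that this step needs only continuity of the coefficients is unsupported.

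The paper instead transfers the bound from the frozen problem by compactness. In Lemma~\ref{lemma : take a ball} one rescales at $x_0\in F(u)$. The limit $v_\infty$ minimizes the functional frozen at $x_0$, for which $\mathcal{H}^{d-1}(\Sigma(v_\infty))=0$ and $\mathcal{R}(v_\infty)$ is smooth. One covers $\Sigma(v_\infty)\cap B_1$ by balls of small $(d-1)$-content. Away from them, Step (II) with uniform constants makes $F(v_k)$ a union of uniformly $C^{1,\alpha}$ graphs converging in $C^1$, so their $\mathcal{H}^{d-1}$-measure is bounded. Iterating inside the leftover balls, whose total $(d-1)$-content drops by a factor $4$ at each generation, gives Theorem~\ref{thm:Hdminus1_bound_freebdry}. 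The measure estimate thus rests on flatness implies regularity applied uniformly, together with the constant-coefficient theory, not on a Laplacian or density count. This is the ingredient your last step is missing.
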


We divide the proof of this theorem into several parts to make the argument more easily followable.

\subsection{The regular set is locally open}

Let $x_0 \in \mathcal{R}(u)$. By definition, the family $(u_r)_{r>0}$, defined by
\[
    u_r(x) \coloneqq r^{-\beta(x_0)}u(x_0 + rx)
\]
converges, up to subsequence, to the trivial cone $u_{x_0}(x) \coloneqq \vartheta(x_0)(x \cdot \nu)_+^{\beta(x_0)}$, for some vector $\nu \in \partial B_1$. The constant $\vartheta(x_0)$ is chosen so that $u_{x_0}$ is a local minimizer to the Alt--Philips functional frozen at $x_0$
\[
    \int \left( \frac{|\nabla u|^2}{2} + \delta(x_0)\,u^{\gamma(x_0)} \right)\,dx.
\]
The precise value of the constant $\vartheta(x_0)$ is
\[
	\vartheta(x_0) \coloneqq \left(\frac{(\beta(x_0)-1)\beta(x_0)}{\gamma(x_0)\, \delta(x_0)}\right)^{\frac{1}{\gamma(x_0)-2}}.
\]
The Weiss energy of those functions is constant and depends on the values of $\gamma$ and $\delta$ applied to $x_0$. This is a direct computation based on its explicit form.

\begin{lemma}\label{lemma: weiss energy of trivial}
Let $x_0 \in \mathbb{R}^d$, $u_{x_0}(x) \coloneqq \vartheta(x_0)(x \cdot \nu)_+^{\beta(x_0)}$ with $\nu \in \partial B_1$, and $\mathcal{A}_{u,0}(\cdot)$ be the Weiss energy defined in Theorem \ref{thm:almost-monotonicity}. Then, there is $\mathcal{V}$ depending on the values of $\gamma$, $\delta$ applied on $x_0$, and dimension such that
\[
    \mathcal{A}_{u_{x_0},0}(r) = \mathcal{V}(x_0) \quad \text{for all} \quad r>0.
\]
Moreover, $\mathcal{V}$, as a function of $x_0$, is continuous and its modulus of continuity depends on those of $\gamma$ and $\delta$.
\end{lemma}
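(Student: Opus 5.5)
The plan is a direct computation in polar coordinates, using the homogeneity of $u_{x_0}$. Write $\beta_0=\beta(x_0)$, $\gamma_0=\gamma(x_0)$, $\delta_0=\delta(x_0)$, $\vartheta_0=\vartheta(x_0)$, with $A=I_d$ (we are in the setting of \eqref{functional:alt_philips-varying}).

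\textbf{Step 1: constancy in $r$.} Since $u_{x_0}$ is $\beta_0$-homogeneous, the change of variables $x=ry$ gives
\[
\mathcal{J}_{x_0}(u_{x_0},B_r)=r^{d+2(\beta_0-1)}\mathcal{J}_{x_0}(u_{x_0},B_1).
\]
Here we use $|\nabla u_{x_0}(ry)|^2=r^{2(\beta_0-1)}|\nabla u_{x_0}(y)|^2$ and $u_{x_0}^{\gamma_0}(ry)=r^{\beta_0\gamma_0}u_{x_0}^{\gamma_0}(y)$, together with $\beta_0\gamma_0=2(\beta_0-1)$. Similarly,
\[
\int_{\partial B_r}u_{x_0}^2=r^{d-1+2\beta_0}\int_{\partial B_1}u_{x_0}^2 .
\]
Both terms in the Weiss energy are therefore independent of $r$, so $\mathcal{A}_{u_{x_0},0}(r)=\mathcal{A}_{u_{x_0},0}(1)$. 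Alternatively, this follows from \eqref{eq:derivative-formula} together with $\mathcal{H}_r(u_{x_0})=0$, exactly as for $w$ in the proof of Theorem~\ref{thm:almost-monotonicity}. It also shows that the value does not depend on $\nu$, by rotation invariance; we take $\nu=e_d$.

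\textbf{Step 2: explicit value.} Set $c_k\coloneqq\int_{B_1}(x_d)_+^{k}\,dx$ and $s_k\coloneqq\int_{\partial B_1}(x_d)_+^{k}\,d\mathcal{H}^{d-1}$. These are explicit Beta-function expressions, continuous in $k>0$. Then
\[
\mathcal{V}(x_0)=\tfrac12\vartheta_0^2\beta_0^2\,c_{2\beta_0-2}+\delta_0\vartheta_0^{\gamma_0}\,c_{\beta_0\gamma_0}-\tfrac12\beta_0\vartheta_0^2\,s_{2\beta_0}.
\]
Since $\beta_0\gamma_0=2\beta_0-2$, the first two terms combine into a single multiple of $c_{2\beta_0-2}$. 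The identity $\int_{\partial B_1}f=(d+k)\int_{B_1}f$ for $k$-homogeneous $f$ then converts $s_{2\beta_0}$ into $(d+2\beta_0)c_{2\beta_0}$. This yields a closed form depending only on $d$, $\gamma_0$ and $\delta_0$. We do not need to simplify it further.

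\textbf{Step 3: continuity.} The map $(\gamma,\delta)\mapsto\mathcal{V}$ is a composition of smooth functions on the compact set $[\inf\gamma,\sup\gamma]\times[\inf\delta,\sup\delta]\subset(0,1)\times(0,\infty)$:
\begin{itemize}
\item $\beta=2/(2-\gamma)$ is smooth;
\item $\vartheta=((\beta-1)\beta/(\gamma\delta))^{1/(\gamma-2)}$ is smooth because its base is positive and bounded away from zero;
\item $k\mapsto c_k,s_k$ is smooth for $k>0$, being given by Gamma functions, or by differentiation under the integral since $(x_d)_+^k\ln x_d$ is integrable.
\end{itemize}
Hence $\mathcal{V}$ is Lipschitz on that compact set with a universal constant $L$, and consequently
\[
|\mathcal{V}(x)-\mathcal{V}(y)|\le L\bigl(\omega_\gamma(|x-y|)+\omega_\delta(|x-y|)\bigr).
\]

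\textbf{Main obstacle.} The only delicate point is the uniform smoothness in Step 3 of the exponent-dependent quantities $c_{2\beta-2}$ and $\vartheta$, that is, checking that no degeneration occurs. This is guaranteed by $\gamma$ staying away from $0$ and $1$ and by $\delta$ staying away from $0$ and $\infty$.
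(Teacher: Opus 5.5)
Your proposal is correct and follows the route the paper has in mind. The paper only remarks that the lemma is ``a direct computation based on its explicit form,'' and your three pieces supply exactly the omitted details: the scaling argument for constancy in $r$, the explicit Beta-type constants, and the Lipschitz dependence of the closed form on $(\gamma(x_0),\delta(x_0))$ over the compact parameter range.

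One small caveat concerns your parenthetical ``alternative'' via \eqref{eq:derivative-formula} and $\mathcal{H}_r(u_{x_0})=0$. It is not an independent argument. Once $\mathcal{H}_r=0$, what remains is the vanishing of $\mathcal{B}(u_{x_0})-\kappa_r\mathcal{J}_{x_0}(u_{x_0},B_r)$, and the paper derives that from the constancy you are trying to prove. Your main scaling argument does not need this alternative.
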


Now we prove an energy gap for the Weiss energy. The proof is essentially the same as that for the constant parameter, but we include it here for completeness.

\begin{proposition}\label{prop:energy-gap}
Let $u \geq 0$ be a local minimizer to the energy-functional \eqref{functional:alt_philips-varying} in $B_2$, $U$ be a blow up at a point $z_0 \in F(u)$ that is a nontrivial minimal cone, and $\mathcal{A}_{U,0}$ its Weiss energy. Assume further that Assumption \eqref{assump:opt-regularity-exponents} is in force. Then, there exists a universal constant $\eta>0$ such that
\begin{equation}\label{eq:gap-at-0}
    \mathcal{A}_{U,0}\ge \mathcal{V}(z_0)+\eta.
\end{equation}
\end{proposition}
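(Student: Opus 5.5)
The plan is to argue by contradiction and compactness, as in the constant-coefficient case, but with the frozen parameters at the base point allowed to vary. Since $U$ is a blow-up at $z_0$, the Corollary and Lemma~\ref{lemma:compactness} show that $U$ is a $\beta(z_0)$-homogeneous local minimizer of the frozen functional
\[
\int \Big(\tfrac{|\nabla w|^2}{2}+\delta(z_0)\,w^{\gamma(z_0)}\Big)\,dx .
\]
Because $U$ is homogeneous, $\mathcal{A}_{U,0}$ is a constant, and it depends only on $U$ and the pair $(\delta(z_0),\gamma(z_0))$.

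First, I would normalise the dependence on $\delta(z_0)$. If $U$ minimizes the frozen functional with coefficient $\delta_0$, then $\lambda U$ minimizes it with coefficient $1$, where $\lambda$ is chosen so that $\lambda^{2-\gamma_0}=\delta_0$ (up to the harmless constant). Since $\mathcal{A}$ is homogeneous of degree $2$ under $U\mapsto\lambda U$, both $\mathcal{A}_{U,0}$ and $\mathcal{V}(z_0)$ rescale by the same bounded factor. It therefore suffices to prove the gap for $\delta\equiv 1$ with $\gamma_0$ ranging over the compact interval $[\min\gamma,\max\gamma]\subset(0,1)$, and then rescale. The resulting $\eta$ depends only on the bounds of $\delta$ and $\gamma$, so it is universal.

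Next, suppose the gap fails. Then there are exponents $\gamma_k\in[\min\gamma,\max\gamma]$ and $\beta_k$-homogeneous nontrivial minimal cones $U_k$ for the frozen problem with parameters $(1,\gamma_k)$ such that
\[
\mathcal{A}_{U_k,0}<\mathcal{V}_k+\tfrac1k .
\]
Passing to a subsequence, $\gamma_k\to\gamma_\infty$. The uniform growth bound \eqref{assumptions: growth for min} gives uniform $C^{1,\alpha}$ control of the $U_k$, and nondegeneracy (universal for the frozen problems) keeps the limit nontrivial. Hence $U_k\to U_\infty$ in $C^1_{loc}\cap H^1_{loc}$. By Lemma~\ref{lemma:compactness}, $U_\infty$ is a $\beta_\infty$-homogeneous minimizer with exponent $\gamma_\infty$, and the free boundaries converge in the Hausdorff sense. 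Since $\mathcal{A}$ is evaluated at $r=1$, it passes to the limit under this convergence, and Lemma~\ref{lemma: weiss energy of trivial} gives $\mathcal{V}_k\to\mathcal{V}_\infty$. We conclude $\mathcal{A}_{U_\infty,0}\le\mathcal{V}_\infty$.

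The final step, which I expect to be the main obstacle, is to derive the contradiction from this inequality. I would use two facts from the constant-parameter theory (cf.\ \cite{ASTU1} and the references therein) for homogeneous minimizers with fixed $\gamma_\infty$. The first is that the minimum of the Weiss energy over nontrivial homogeneous minimizers is attained exactly at half-plane solutions; this follows from a density argument, since the energy equals a positive multiple of $\int_{B_1}U^{\gamma}$, and a comparison at the free boundary shows half-spaces minimize the positivity density. The second is the isolation of half-plane solutions: a homogeneous minimizer that is $C^1$-close to a half-plane solution on $B_1$ is a half-plane solution, by the flatness-implies-regularity step (II) and homogeneity. From the first fact, $U_\infty$ must be a half-plane solution. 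From the second, together with the $C^1$ convergence, $U_k$ is a half-plane solution for all large $k$, which contradicts the assumption that each $U_k$ is nontrivial. The delicate point is making this isolation statement uniform in $\gamma_k$. I would handle it by running the flatness-improvement argument of \cite[Section~7]{ASTU1} with constants depending only on the range of $\gamma$, which is possible because that range stays a positive distance from $0$ and $1$.
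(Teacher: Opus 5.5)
Your architecture is the same as the second half of the paper's proof: a contradiction sequence of cones whose Weiss energy tends to $\mathcal{V}$, compactness, identification of the limit as a half-space solution, and then flatness-implies-regularity plus homogeneity to force $U_k$ to be half-space solutions. You are in fact more careful than the paper about universality. The paper runs the contradiction with the parameters frozen at $(\delta(0),\gamma(0))$, so strictly it only yields an $\eta$ depending on $z_0$. Yet the openness corollary applies the gap at varying points $x_j$. Normalising $\delta$ and letting $\gamma_k$ range over $[\min\gamma,\max\gamma]$ fixes this. The price is continuity of $\mathcal{V}$ in $\gamma$ and a flatness theorem uniform in $\gamma$, both of which you correctly flag.

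The step that is not adequately justified is your first fact. It is precisely what the paper proves in its first half, and you need it including its equality case: from $\mathcal{A}_{U_\infty,0}\le\mathcal{V}_\infty$ you must conclude that $U_\infty$ is a half-space solution. The density sketch does not deliver this. For a $\beta$-homogeneous minimizer the Weiss energy is $(1-\gamma/2)\int_{B_1}U^{\gamma}$, not the density of $\{U>0\}$. A density comparison at the free boundary does not by itself show that half-space solutions minimise this quantity, and certainly not that they are the only minimisers.

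The argument that works, and is the paper's, is Weiss monotonicity centred at a second free boundary point:
\begin{itemize}
\item Pick $X_0\in\mathcal{R}(U)\setminus\{0\}$; regular points exist by the constant-coefficient theory.
\item Since $U$ minimises a constant-coefficient functional, $r\mapsto\mathcal{A}_{U,X_0}(r)$ is genuinely nondecreasing.
\item As $r\to 0$ its limit is $\mathcal{V}$, because the blow-up at $X_0$ is a half-space solution.
\item By homogeneity, $\mathcal{A}_{U,X_0}(r)$ is the Weiss energy at radius $1$ of $U$ centred at $X_0/r$, which tends to $\mathcal{A}_{U,0}$ as $r\to\infty$.
\end{itemize}
This gives $\mathcal{A}_{U,0}\ge\mathcal{V}$. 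In case of equality, $\mathcal{A}_{U,X_0}$ is constant, so $U$ is homogeneous about $X_0$ as well as about $0$, hence invariant in the direction $X_0$. An induction on the dimension, as in Weiss, then shows $U$ is a half-space solution. With this replacing your density sketch, the rest of your proposal goes through.
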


\begin{proof}
For simplicity, we assume $z_0 = 0$ and write $\mathcal{A}_{U} \equiv \mathcal{A}_{U,0}$. First, we prove the strict inequality $\mathcal{A}_{U}>\mathcal{V}(0)$.
Assume, toward a contradiction, that
\[
    \mathcal{A}_{U}\le \mathcal{V}(0).
\]
Let $X_0 \in \mathcal{R}(U)$ be a regular free boundary point, and consider the blow-up sequence
\[
    U_r(X)\coloneqq r^{-\beta(0)}U(X_0+rX).
\]
Taking a subsequence if necessary, it converges to a half-space solution of the form
\[
    U_0(X) = \vartheta(0)\,(X\cdot \nu)_+^{\beta(0)} \quad \text{for some} \quad \nu \in \partial B_1.
\]
In particular, by Lemma \ref{lemma:compactness}, $U_0$ is a local minimizer of the functional
\begin{equation}\label{eq:func-at-0}
	\int \left(\frac{|\nabla v|^2}{2} + \delta(0) v^{\gamma(0)} \right)\,dx,
\end{equation}
and the Weiss energy of $U_0$ around $0$ equals $\mathcal{V}(0)$. Since $U$ is $\beta(0)$-homogeneous with respect to the origin, we have
\[
    U_r(X)=U\bigl(X+r^{-1}X_0\bigr)\quad \text{for all }X\in \mathbb{R}^d,
\]
and therefore 
\begin{align*}
	\mathcal{A}_{U,X_0}(r) & = \mathcal{A}_{U_r,r^{-1}X_0}(1)\\
	& = \mathcal{J}_{0}(U_r,B_1(r^{-1}X_0)) - \frac{1}{2}\beta(0)\int_{\partial B_1(r^{-1}X_0)}U_r^2\, d\mathcal{H}^{d-1}.
\end{align*}
By the convergence of $U_r$ to $U_0$ and the contradiction assumption, we obtain
\[
	\lim_{r \to \infty}\mathcal{A}_{U,X_0}(r) = \mathcal{A}_{U,0}(1) \leq \mathcal{V}(0).
\]
On the other hand, since $X_0$ is a regular point, we have
\[
    \lim_{r\to 0}\mathcal{A}_{U,X_0}(r) = \mathcal{V}(0).
\]
Since $r\mapsto \mathcal{A}_{U,X_0}(r)$ is monotone nondecreasing, it must be constant and equal to $\mathcal{V}(0)$ for all $r>0$. This implies that $U$ is $\beta(0)$-homogeneous with respect to $X_0$. 
Hence $U$ is a cone with vertex at $X_0$ as well as at $0$, and therefore must be a half-space solution, contradicting the assumption that $U$ is nontrivial. This can be proved by induction in dimension, as in \cite[Proof of Proposition~8.1]{W3}. This proves $\mathcal{A}_{U}>\mathcal{V}(0)$.

We now show that the strict inequality is quantitative, i.e.\ \eqref{eq:gap-at-0} holds for some universal $\eta>0$. Assume not. Then there exist nontrivial minimizing cones $U_k$ for functional \eqref{eq:func-at-0} and numbers $\eta_k\downarrow 0$ such that
\[
    \mathcal{A}_{U_k} < \mathcal{V}(0) + \eta_k.
\]
By the preceding argument, we also have $\mathcal{A}_{U_k}\ge \mathcal{V}(0)$, hence
\[
    \mathcal{A}_{U_k}\to \mathcal{V}(0) \quad \text{as} \quad k \to \infty.
\]
Using compactness of minimizers, we may assume
\[
    U_k\to U_\infty \quad \text{locally uniformly in }\mathbb{R}^d,
\]
where $U_\infty$ is a minimizing cone. By the lower semicontinuity of the Weiss energy and the limit above,
\[
    \mathcal{A}_{U_\infty}=\mathcal{V}(0),
\]
and so, in view of the preceding argument, $U_\infty$ must be the trivial cone. Since $U_k\to U_\infty$, for $k$ sufficiently large the free boundaries $F(U_k)$ are sufficiently flat in $B_1$. By the ``flatness implies regularity'' theorem for the constant-parameter Alt--Phillips functional, each $F(U_k)$ is $C^{1}$ in $B_1$ for $k$ large. Because $U_k$ is homogeneous with respect to $0$, the set $F(U_k)$ is a cone:
\[
    x\in F(U_k)\ \Longrightarrow\ \lambda x\in F(U_k)\qquad \forall \lambda>0.
\]
In particular, $F(U_k)$ is a $C^{1}$ conical hypersurface with vertex at $0$, and hence it must be a hyperplane. 
Therefore $\{U_k>0\}$ is a half-space.

Finally, in a half-space the only nonnegative $\beta(0)$-homogeneous minimizer of the frozen Alt--Phillips functional is the half-space solution
\[
    U_k(X)=\vartheta(0)\,(X\cdot \nu)_+^{\beta(0)}.
\]
This contradicts the choice of $U_k$ as nontrivial minimizing cones. 
\end{proof}

As a consequence of the energy gap, we obtain that $\mathcal{R}(u)$ is locally open in $F(u)$.

\begin{corollary}
Let $u \geq 0$ be a local minimizer to the energy-functional \eqref{functional:alt_philips-varying} in $B_2$. Assume further that Assumption \eqref{assump:opt-regularity-exponents} is in force. Then, $\mathcal{R}(u)$ is locally open in $F(u)$.
\end{corollary}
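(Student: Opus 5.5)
We argue by an upper semicontinuity argument for the limit Weiss energy combined with the energy gap of Proposition~\ref{prop:energy-gap}. For $x_0\in F(u)\cap B_1$ set
\[
\Theta(x_0)\coloneqq \lim_{r\to0^+}\mathcal{A}_{u,x_0}(r),
\]
which exists by Theorem~\ref{thm:almost-monotonicity}. The limit exists because $\mathcal{A}_{u,x_0}(r)+\int_0^r g$ is nondecreasing and $g\in L^1$. It is also bounded thanks to the growth assumption \eqref{assumptions: growth for min}. The key point is that the error function $g$ in Theorem~\ref{thm:almost-monotonicity} is built only from the moduli $\omega_\delta,\omega_\gamma,\omega_A$ and universal constants. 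Hence $G(r)\coloneqq\int_0^r g$ is independent of $x_0$ and tends to $0$ as $r\to0$.

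\emph{Step 1: identify $\Theta$ on each part of $F(u)$.} By the scaling identity used in the proof of the Corollary on homogeneity, $\Theta(x_0)=\mathcal{A}_{U,0}(1)$ for any blow-up $U$ at $x_0$. Every such $U$ is $\beta(x_0)$-homogeneous.
\begin{itemize}
\item If $x_0\in\mathcal{R}(u)$, then $\Theta(x_0)=\mathcal{V}(x_0)$ by Lemma~\ref{lemma: weiss energy of trivial}.
\item If $x_0\in\Sigma(u)$, then some blow-up is a nontrivial minimal cone. By Proposition~\ref{prop:energy-gap}, $\Theta(x_0)\ge\mathcal{V}(x_0)+\eta$. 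In this case every blow-up must be nontrivial, since all blow-ups at $x_0$ share the value $\Theta(x_0)$.
\end{itemize}
We may first reduce to $A\equiv I$ locally via the linear change of variables from the Corollary's proof. For the functional \eqref{functional:alt_philips-varying} this is automatic.

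\emph{Step 2: upper semicontinuity of $\Theta$ along $F(u)$.} Let $x_k\in F(u)$ with $x_k\to x_0$. For fixed $r$, almost monotonicity gives
\[
\Theta(x_k)\le \mathcal{A}_{u,x_k}(r)+G(r).
\]
The map $x\mapsto\mathcal{A}_{u,x}(r)$ is continuous for fixed $r>0$. This holds because $u\in C^1$, so the integrals move continuously with the center, and because the frozen coefficients $\delta(x),\gamma(x),\beta(x)$ and the powers $r^{-(\cdots\beta(x))}$ are continuous in $x$ for fixed $r$. Therefore
\[
\limsup_k\Theta(x_k)\le \mathcal{A}_{u,x_0}(r)+G(r).
\]
Letting $r\to0$ yields $\limsup_k\Theta(x_k)\le\Theta(x_0)$.

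\emph{Step 3: conclusion.} Suppose $\mathcal{R}(u)$ is not locally open. Then there are $x_0\in\mathcal{R}(u)$ and $x_k\in\Sigma(u)$ with $x_k\to x_0$. By Step 1, $\Theta(x_k)\ge\mathcal{V}(x_k)+\eta$. Lemma~\ref{lemma: weiss energy of trivial} gives continuity of $\mathcal{V}$, so $\mathcal{V}(x_k)\to\mathcal{V}(x_0)$. Combining with Step 2,
\[
\mathcal{V}(x_0)+\eta\le\limsup_k\Theta(x_k)\le\Theta(x_0)=\mathcal{V}(x_0),
\]
which is a contradiction.

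The main obstacle is the uniformity of $\eta$ in Proposition~\ref{prop:energy-gap} with respect to the base point. As stated, the gap is universal, meaning it depends only on the bounds of $\delta$ and $\gamma$. If the proposition only gave a gap depending on the frozen parameters $(\delta(z_0),\gamma(z_0))$, I would rerun its compactness argument along the sequence $x_k$. In that argument the parameters converge to $(\delta(x_0),\gamma(x_0))$, and Lemma~\ref{lemma:compactness} with flatness-implies-regularity for the frozen problem still rules out nontrivial cones that accumulate on the half-space solution.

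A secondary technical point is the uniformity in $x_0$ of the constant $C$ in $g$. It comes from \eqref{assumptions: growth for min} and from the comparison $r^{\beta_0\min_{B_r}\gamma}\lesssim r^{\beta_0\gamma_0}$. Both are uniform over $x_0\in F(u)\cap B_1$ by the log-Dini assumption on $\gamma$.
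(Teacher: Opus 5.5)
Your argument is correct and is essentially the paper's proof, recast as upper semicontinuity of the density $\Theta$. The paper likewise fixes a radius $r_0$, uses continuity of $x\mapsto\mathcal{A}_{u,x}(r_0)$ in the center (made quantitative in \eqref{eq:Weiss-modulus-collapsed}), and applies almost monotonicity at the singular points $x_j$ with the same integrable $g$ to let $r\to 0$; it then contradicts the energy gap of Proposition~\ref{prop:energy-gap} via the continuity of $\mathcal{V}$, and your remarks on the base-point uniformity of $g$ and $\eta$ simply make explicit what the paper uses implicitly.
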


\begin{proof}
Let $x_0 \in \mathcal{R}(u)$. We shall prove that there exists $r>0$ such that every point in $B_r(x_0) \cap F(u) \subset \mathcal{R}(u)$. Arguing by contradiction, assume that no such radius exists. Then, we would be able to find a sequence $x_j \to x_0$, with $x_j \in \Sigma(u)$. Let $\eta^\prime>0$ be a small parameter to be chosen later. Since $x_0 \in \mathcal{R}(u)$, we obtain
\[
    \lim_{r \to 0} \mathcal{A}_{u,x_0}(r) = \mathcal{V}(x_0).
\]
We then let $r_0>0$ small such that
\[
    \mathcal{A}_{u,x_0}(r_0) \le \mathcal{V}(x_0) + \eta^\prime.
\]
For $j$ large enough, we obtain
\begin{equation}\label{eq:Weiss-modulus-collapsed}
\big|\mathcal{A}_{u,x_0}(r_0)-\mathcal{A}_{u,x_j}(r_0)\big|
\ \le\
C'\Big[
\tfrac{d_j}{r_0}\;+\;\omega_\delta(d_j)\;+\;\omega_\gamma(d_j)\,(1+|\log r_0|)
\Big],
\end{equation}
with $d_j = |x_j - x_0| \le r_0/2$. Therefore,
\[
   \mathcal{A}_{u,x_j}(r_0) \le \mathcal{V}(x_0) + \eta^\prime + C'\Big[
\tfrac{d_j}{r_0}\;+\;\omega_\delta(d_j)\;+\;\omega_\gamma(d_j)\,(1+|\log r_0|)
\Big].
\]
In particular, for $j$ large enough, \(\mathcal{A}_{u,x_j}(r_0) \le \mathcal{V}(x_0) + 2\eta^\prime\). By the almost monotonicity formula, Theorem \ref{thm:almost-monotonicity}, we have that there exists $g \in L^1$ such that
\[
    \frac{d}{dr}\mathcal{A}_{u,x_j}(r) \geq -g(r),
\]
from which follows
\[
    \mathcal{A}_{u,x_j}(r) \le \mathcal{A}_{u,x_j}(r_0) + f(r_0) - f(r),
\]
where $f$ is the anti derivative of $g$. Then, we obtain
\[
    \mathcal{A}_{u,x_j}(r) \le \mathcal{A}_{u,x_j}(r_0) + \int_r^{r_0}g(s)\,ds.
\]
Putting all together, and passing to the limit as $r \to 0$, we get
\begin{align*}
    \mathcal{A}_{U_j,0} &\le \mathcal{V}(x_0) + 2\eta^\prime + \int_0^{r_0}g(s)\,ds\\
    & \le \mathcal{V}(x_j) + \omega_{\mathcal{V}}(d_j) + 2\eta^\prime + \int_0^{r_0}g(s)\,ds,
\end{align*}
where $U_j$ is the blow-up limit at $x_j \in \Sigma(u)$. Next, we fix $\eta^\prime > 0$ sufficiently small and then choose $r_0 > 0$ sufficiently small so that
\[
\mathcal{A}_{U_j,0} \leq \mathcal{V}(x_j) + \frac{\eta}{2},
\]
where $\eta$ is the constant appearing in Proposition \ref{prop:energy-gap}. This is incompatible with \eqref{eq:gap-at-0}, yielding a contradiction.
\end{proof}

\subsection{Dimension reduction}

We show that the dimension of the singular set is less than or equal to $d-3$. We first prove the following proposition, which relates the size of the singular set of the minimizers to the blow-ups. This is an adaptation of the argument in \cite{DSS0}.

\begin{proposition}\label{lemma:singular-set-blow_up-to-minimizer}
Let $u \geq 0$ be a local minimizer to the energy-functional \eqref{functional:alt_philips-varying} in $B_2$. Assume further that Assumption \eqref{assump:opt-regularity-exponents} is in force. For $x_0\in \Sigma(u)$, let $\mathcal{S}_u(x_0)$ denote the set of blow-ups of $u$ at $x_0$, and set
\[
    \mathcal{S}_u \coloneqq \bigcup_{x_0\in\Sigma(u)} \mathcal{S}_u(x_0).
\]
If there exists $s>0$ such that
\[
    \mathcal H^s\bigl(\Sigma(U)\bigr)=0 \quad \text{for every} \quad U\in \mathcal{S}_u,
\]
then
\[
    \mathcal H^s(\Sigma(u))=0.
\]
\end{proposition}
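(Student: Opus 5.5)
We argue by contradiction, following the Federer-type blow-up scheme of \cite{DSS0}. Suppose $\mathcal H^s(\Sigma(u))>0$. Then $\mathcal H^s_\infty(\Sigma(u))>0$, and by the standard density theorem for Hausdorff content there is a point $x_0\in\Sigma(u)$ and radii $r_k\downarrow 0$ such that
\[
    \mathcal H^s_\infty\bigl(\Sigma(u)\cap B_{r_k}(x_0)\bigr)\ \ge\ c\,r_k^s ,
\]
for a dimensional constant $c>0$, where $\mathcal H^s_\infty$ is the $s$-dimensional Hausdorff content. We rescale with the exponent frozen at $x_0$, namely $u_k(x)\coloneqq r_k^{-\beta(x_0)}u(x_0+r_kx)$. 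Each $u_k$ is a local minimizer of a functional of type \eqref{functional:alt_philips-varying} with coefficients $\delta_k(x)=\delta(x_0+r_kx)$ and $\gamma_k(x)=\gamma(x_0+r_kx)$. The prefactors coming from the mismatch $\gamma_k(x)\neq\gamma(x_0)$ are $r_k^{O(\omega_\gamma(r_k))}\to 1$, because $\omega_\gamma(r)|\ln r|\to 0$ under \eqref{assump:opt-regularity-exponents}. Using the growth bound \eqref{assumptions: growth for min} and nondegeneracy, a subsequence converges to some $U\in\mathcal S_u(x_0)$. Lemma~\ref{lemma:compactness} shows that $U$ minimizes the frozen functional, and gives Hausdorff convergence of $F(u_k)$ to $F(U)$ on compact sets. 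The rescaled singular sets $\Sigma_k\coloneqq (\Sigma(u)-x_0)/r_k$ satisfy $\mathcal H^s_\infty(\Sigma_k\cap B_1)\ge c$.

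\textbf{Key step (upper semicontinuity of singular sets).} We claim that every limit point of points $y_k\in\Sigma_k\cap\overline B_1$ lies in $\Sigma(U)$. Let $y_k\to y$. Then $y\in F(U)$ by the Hausdorff convergence. Suppose instead that $y\in\mathcal R(U)$. Then $\lim_{\rho\to0}\mathcal A_{U,y}(\rho)=\mathcal V(x_0)$, and we can pick $\rho_0$ with $\mathcal A_{U,y}(\rho_0)\le \mathcal V(x_0)+\eta'$. By convergence of $u_k$ to $U$ in $H^1_{loc}\cap L^\infty_{loc}$ (strong $H^1$ convergence being part of the compactness), we get $\mathcal A_{u_k,y_k}(\rho_0)\le\mathcal V(x_0)+2\eta'$ for large $k$. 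Here we use an estimate like \eqref{eq:Weiss-modulus-collapsed} to account for moving the center and freezing the coefficients at $x_0+r_ky_k$ instead of $x_0$. Theorem~\ref{thm:almost-monotonicity}, applied at $x_0+r_ky_k$ and rescaled, bounds the error by $\int_0^{r_k\rho_0}g$, which tends to zero. Hence the blow-up of $u$ at $x_0+r_ky_k$ has Weiss energy at most $\mathcal V(x_0+r_ky_k)+\eta/2$, using the continuity of $\mathcal V$ from Lemma~\ref{lemma: weiss energy of trivial}. This contradicts Proposition~\ref{prop:energy-gap}, since $x_0+r_ky_k\in\Sigma(u)$.

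The claim implies that for every $\varepsilon>0$, for $k$ large, $\Sigma_k\cap\overline B_1$ lies in the $\varepsilon$-neighbourhood of the compact set $K\coloneqq\Sigma(U)\cap\overline B_1$. Now we use the hypothesis $\mathcal H^s(\Sigma(U))=0$. It gives $\mathcal H^s_\infty(K)=0$, so $K$ admits a finite open cover by balls $B_{\rho_i}(z_i)$ with $\sum\rho_i^s<c/2$. The union of these balls contains an $\varepsilon$-neighbourhood of $K$ by compactness. Therefore $\mathcal H^s_\infty(\Sigma_k\cap B_1)<c/2$ for large $k$, which contradicts the density lower bound. Hence $\mathcal H^s(\Sigma(u))=0$.

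The main obstacle is the key step. It requires that the Weiss energy be continuous under simultaneous change of center, change of frozen coefficients, and $H^1$ convergence, uniformly in $k$. The uniform smallness of $\int_0^{r}g$ is what makes the argument work. A subtlety is that $\mathcal V$ and $\beta$ depend on the base point, so the gap $\eta$ must be uniform in the point, which Proposition~\ref{prop:energy-gap} provides.
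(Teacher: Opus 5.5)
Your argument is correct, but it is organized differently from the paper's.

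The paper never uses the density theorem for $\mathcal H^s_\infty$. Instead, it first proves, by a blow-up contradiction, a local ``half-cover'' property at each $x\in\Sigma(u)$: for $\mu\le d_x$, every $D\subset\Sigma(u)\cap B_\mu(x)$ is covered by finitely many balls centered in $D$ with $\sum r_i^s\le \mu^s/2$. It then iterates this property on the sets $D_m=\{x\in\Sigma(u): d_x\ge 1/m\}$ to get $\mathcal H^s(D_m)=0$, and concludes by countable subadditivity.

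The more substantive difference is how you obtain upper semicontinuity of singular sets under blow-up. The paper applies the flatness-implies-regularity theorem of \cite{ASTU1} near each regular point of the blow-up, which rules out singular points of the rescalings there. You derive it from the uniform energy gap of Proposition~\ref{prop:energy-gap} and the base-point-independent integrable error $g$ of Theorem~\ref{thm:almost-monotonicity}. This is the same mechanism as in the corollary showing that $\mathcal R(u)$ is open.

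Your route is shorter and uses only the paper's own monotonicity machinery. That includes the moving-center estimate \eqref{eq:Weiss-modulus-collapsed} and the factors $r_k^{O(\omega_\gamma(r_k))}$, which you correctly control via $\omega_\gamma(r)|\ln r|\to 0$. Your sequential-compactness formulation on $\overline B_1$ is also cleaner than the paper's passage through the set $\mathcal K$. That set is not closed near $\partial B_1$, and the shrinking of $B_{\rho_i}$ to $B_{\rho_i/2}$ there is not justified.

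The paper's route, in exchange, avoids the Hausdorff-content density theorem. Through flatness implies regularity, it also yields the stronger fact that $F(u_k)$ is $C^{1,\alpha}$ near $\mathcal R(U)$, which it reuses in the Hausdorff measure estimate.

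One minor point: compactness of $K=\Sigma(U)\cap\overline B_1$ uses that $\Sigma(U)$ is closed, which follows from openness of the regular set for the frozen problem. Alternatively, you can skip the $\varepsilon$-neighbourhood and argue directly by sequential compactness. For large $k$, $\Sigma_k\cap\overline B_1$ lies in the open union of any countable family of balls covering $K$.
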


\begin{proof}
We first state a local covering improvement property at singular points. Its proof is postponed to the end.

\medskip
\noindent\textbf{Claim.} For each \(x \in \Sigma(u)\), there exists \(d_x > 0\) with the following property: whenever \(0 < \mu \le d_x\) and \(D \subset \Sigma(u) \cap B_\mu(x)\), there exists a finite collection of balls \(\{B_{r_i}(x_i)\}_{i=1}^N\), centered at points \(x_i \in D\), such that
\begin{equation}\label{eq:local_half_cover}
D\subset \bigcup_{i=1}^{N} B_{r_i}(x_i),
\qquad
\sum_{i=1}^{N} r_i^{\,s}\le \frac{\mu^s}{2}.
\end{equation}

Assuming the claim, for each $m \in \mathbb{N}$, define the set
\[
	D_m \coloneqq \{x \in \Sigma(u) : d_x \ge m^{-1}\}.
\]
We prove that $\mathcal{H}^s(D_m) = 0$ for every $m \in \mathbb{N}$. It is enough to show that
\[
	\mathcal{H}^s\bigl(D_m \cap B_{m^{-1}}(x)\bigr) = 0
\]
for every $x \in \Sigma(u)$. To this end, we construct inductively, for each $k \in \mathbb{N}$, a covering
\[
	\{B_{r_i}(x_i)\}_{i=1}^{\mathcal{N}_k}
\]
of $D_m \cap B_{m^{-1}}(x)$ such that
\begin{equation}\label{eq:decay-covering-measure}
\sum_{i=1}^{\mathcal{N}_k} r_i^s \lesssim 2^{-k}.
\end{equation}

Fix $m \in \mathbb{N}$ and $x \in \Sigma(u)$. For $k=1$, we apply the claim with $\mu = m^{-1} \le d_x$ and
\[
	D = D_m \cap B_{m^{-1}}(x),
\]
obtaining a covering $\{B_{r_i}(x_i)\}_{i=1}^{\mathcal{N}_1}$ for which \eqref{eq:local_half_cover} holds. This establishes the case $k=1$.

Now assume that the statement holds for some $k \in \mathbb{N}$, namely, that there exists a covering
\[
	\{B_{r_i}(x_i)\}_{i=1}^{\mathcal{N}_k}
\]
of $D_m \cap B_{m^{-1}}(x)$ satisfying \eqref{eq:decay-covering-measure}. For each $i \in \{1,\dots,\mathcal{N}_k\}$, we apply the claim with
\[
	D = D_m \cap B_{r_i}(x_i),
\]
and obtain a covering
\[
	\mathcal{C}_{i,k} \coloneqq \{B_{r_j}(x_j)\}_{j=1}^{\mathcal{N}_k^i}
\]
such that
\[
	\sum_{j=1}^{\mathcal{N}_k^i} r_j^s \le \frac{r_i^s}{2}.
\]
Taking the union of all these families, we obtain a covering of $D_m \cap B_{m^{-1}}(x)$, and therefore
\[
	\sum_{i=1}^{\mathcal{N}_k} \sum_{j=1}^{\mathcal{N}_k^i} r_j^s
	\le \frac{1}{2} \sum_{i=1}^{\mathcal{N}_k} r_i^s
	\lesssim 2^{-(k+1)}.
\]
Thus \eqref{eq:decay-covering-measure} holds at level $k+1$, completing the induction.

It follows that the set $D_m \cap B_{m^{-1}}(x)$ admits a covering by balls whose $s$-dimensional size is arbitrarily small. Hence
\[
	\mathcal{H}^s\bigl(D_m \cap B_{m^{-1}}(x)\bigr) = 0.
\]

Finally, since
\[
	\Sigma(u) = \bigcup_{m=1}^\infty D_m,
\]
countable subadditivity of $\mathcal{H}^s$ yields
\[
	\mathcal{H}^s(\Sigma(u)) = 0,
\]
and the proposition is proved.

Now we return to the proof of the claim. Assume, by contradiction, that there exists $y\in\Sigma(u)$ and a sequence $\mu_k\downarrow 0$ such that~\eqref{eq:local_half_cover} fails for some choice of
$D \subset \Sigma(u)\cap B_{\mu_k}(y)$.

Consider the family $(u_k)_{k \in \mathbb{N}}$ defined as
\[
    u_k(X)\coloneqq \mu_k^{-\beta(y)}\,u\bigl(y+\mu_k X\bigr).
\]
By compactness, after passing to a subsequence, we have $u_k\to U_y$ locally uniformly in $\mathbb R^n$, where $U_y\in \mathcal{S}_u(y)$. By the hypothesis of the lemma,
\[
    \mathcal H^s\bigl(\Sigma(U_y)\bigr)=0.
\]
Hence, there exists a finite cover $\{B_{\rho_i}(X_i)\}_{i=1}^{\mathcal N_1}$ of $\Sigma(U_y)\cap B_1$, such that
\begin{equation}\label{eq:cover_blowup}
\Sigma(U_y)\cap B_1 \subset \bigcup_{i=1}^{\mathcal N_1} B_{\rho_i}(X_i),
\quad \text{with} \quad
\sum_{i=1}^{N_1} \rho_i^{\,s}\le \frac12.
\end{equation}
Set
\[
    \mathcal K \coloneqq \bigl(F(U_y)\cap B_1\bigr)\setminus \bigcup_{i=1}^{N_1} B_{\rho_i}(X_i).
\]
Then $\mathcal K$ is compact and, by construction, $\mathcal{K} \subset \mathcal{R}(U_y)$. As a consequence, for each $Z \in K$ there exist $r_Z>0$ such that $U_y$ satisfies the flatness assumption in $B_{r_Z}(Z)$, meaning that there exists $\nu \in \partial B_1$ such that
\[
    \vartheta(y)((X-Z)\cdot \nu - \varepsilon)_+^{\beta(y)} \le U_y(X) \le \vartheta(y)((X-Z)\cdot \nu + \varepsilon)_+^{\beta(y)}, \quad \text{for} \quad X \in B_{r_Z}(Z).
\]
By the convergence of $u_k$ to $U_y$, up to adjusting parameters, $u_k$ satisfies the flatness assumption as well. Therefore, by the flatness implies regularity result from \cite{ASTU1}, $F(u_k)\cap B_{r_Z/2}(Z) \subset \mathcal{R}(u_k)$, and in particular
\[
    \Sigma(u_k)\cap B_{r_Z/2}(Z)=\varnothing\qquad\text{for all sufficiently large }k.
\]
Since $Z \in \mathcal{K}$ is arbitrary, we conclude that,
\begin{equation}\label{eq:singular_inclusion_scaled}
\Sigma(u_k)\cap \mathcal{K} = \varnothing,
\qquad\text{hence}\qquad
\Sigma(u_k)\cap B_1 \subset \bigcup_{i=1}^{\mathcal N_1} B_{\rho_i/2}(X_i).
\end{equation}
Scaling~\eqref{eq:singular_inclusion_scaled} back to the original variables gives, for all large $k$,
\[
    \Sigma(u)\cap B_{\mu_k}(y)\subset \bigcup_{i=1}^{\mathcal N_1} B_{\rho_i\mu_k/2}\bigl(y+\mu_k X_i\bigr), \qquad
\sum_{i=1}^{\mathcal N_1}(\rho_i\mu_k/2)^s \le \frac{\mu_k^s}{2},
\]
which contradicts \eqref{eq:local_half_cover}. This proves the claim.
\end{proof}

Now we estimate the size of the singular set.

\begin{theorem}\label{thm:dimension-singular-set}
Let $u \geq 0$ be a local minimizer to the energy-functional \eqref{functional:alt_philips-varying} in $B_2$. Assume further that Assumption \eqref{assump:opt-regularity-exponents} is in force. It follows that
\[
	\mathrm{dim}_\mathcal{H}\left(\Sigma(u)\right) \le d-3.
\]
\end{theorem}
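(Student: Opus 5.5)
The plan is to combine Proposition \ref{lemma:singular-set-blow_up-to-minimizer} with a Federer-type dimension reduction for the blow-up cones. By that proposition, it suffices to show that for every $s>d-3$ and every blow-up $U\in\mathcal{S}_u$ we have $\mathcal{H}^s(\Sigma(U))=0$. Each such $U$, taken at some $x_0\in\Sigma(u)$, is a $\beta(x_0)$-homogeneous local minimizer of the frozen functional \eqref{eq:func-at-0} with constant coefficients $\delta(x_0)$, $\gamma(x_0)$. This follows from the Corollary to Theorem \ref{thm:almost-monotonicity}, combined with Lemma \ref{lemma:compactness}. So the problem reduces to the constant-coefficient Alt--Phillips problem, where the exact Weiss monotonicity formula is available, since $g\equiv 0$.

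For the constant-parameter class I will run the classical dimension reduction. First I will show that if $U$ is a homogeneous minimizer and $X_0\in\Sigma(U)\setminus\{0\}$, then any blow-up of $U$ at $X_0$ is translation invariant in the direction $X_0$. The argument is the one already used in Proposition \ref{prop:energy-gap}: the blow-up $U_{X_0}$ at $X_0$ is homogeneous about $0$. Because $U$ is homogeneous about $0$, the rescalings about $X_0$ are also asymptotically homogeneous about the points $-tX_0/r$. Exact Weiss monotonicity and upper semicontinuity of the density then force $\mathcal{A}_{U_{X_0},tX_0}\equiv \mathcal{A}_{U_{X_0},0}$, hence homogeneity about every point of $\mathbb{R}X_0$. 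Consequently $U_{X_0}(X)=V(X')$, with $V$ a singular homogeneous minimizer in dimension $d-1$ (singular because of the energy gap, Proposition \ref{prop:energy-gap}, and upper semicontinuity). Iterating this step reduces everything to identifying the lowest dimension $d^*$ that admits a nontrivial minimal cone. There the singular set is $\{0\}$, and standard Federer reduction (Almgren's stratification via the density/upper-semicontinuity argument together with the covering of Proposition \ref{lemma:singular-set-blow_up-to-minimizer} applied in the frozen setting) gives $\dim_{\mathcal H}\Sigma(U)\le d-d^*$.

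It therefore remains to show $d^*\ge 3$, that is, that in dimensions $d=1,2$ every homogeneous minimizer of the frozen Alt--Phillips functional is a trivial cone. In $d=1$ this is an ODE statement: a $\beta$-homogeneous nonnegative minimizer is $c_\pm (x_\pm)^\beta$, and minimality (the equation plus the free boundary condition) forces the half-line profile with constant $\vartheta$. Two-sided profiles are ruled out by the strict gap/minimality comparison. In $d=2$ I will write $U=r^\beta\phi(\theta)$. Then $\phi$ solves $\phi''+\beta^2\phi=\delta\gamma\phi^{\gamma-1}$ on each arc of positivity, with $\phi=\phi'=0$ at the endpoints. A phase-plane/energy-conservation argument shows each arc has a fixed length equal to $\pi$, the length for the half-plane solution. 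Any cone with several arcs, or with an arc of different length, is either incompatible or excluded by a stability/second-variation argument, following \cite{ASTU1} and \cite{W3} for the constant-exponent case. Since this classification is a statement purely about constant coefficients, I can quote it from \cite{ASTU1} (where it was proven for the constant-parameter problem) rather than reprove it.

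I expect the main obstacle to be the first step: justifying translation invariance of the second blow-up and the upper semicontinuity of the density at points of $\Sigma(U)$ away from the origin. This is exactly where the exact Weiss formula for the frozen problem is essential, and one must check that the base point of freezing stays $x_0$ for all points of $F(U)$. This holds automatically, since $U$ minimizes a functional with constant coefficients. Once this is settled, the conclusion $\mathcal{H}^s(\Sigma(U))=0$ for $s>d-3$, fed into Proposition \ref{lemma:singular-set-blow_up-to-minimizer}, yields $\dim_{\mathcal H}\Sigma(u)\le d-3$.
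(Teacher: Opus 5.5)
Your proposal is correct and follows the paper's argument. By Proposition~\ref{lemma:singular-set-blow_up-to-minimizer} it suffices to have $\mathcal{H}^s(\Sigma(U))=0$ for every $s>d-3$ and every blow-up $U$. These blow-ups are, by Theorem~\ref{thm:almost-monotonicity} and Lemma~\ref{lemma:compactness}, $\beta(x_0)$-homogeneous minimizers of the constant-coefficient functional frozen at $x_0$, for which $\dim_{\mathcal H}\Sigma(U)\le d-3$ is classical. The only difference is that you sketch the Federer dimension reduction and the $d=1,2$ classification behind that constant-coefficient bound, whereas the paper simply quotes it as known.
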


\begin{proof}
First, recall the definition of dimension:
\[
    \mathrm{dim}_\mathcal{H}\left(\Sigma(u)\right) \coloneqq \inf \left\{s \geq 0 \colon \mathcal{H}^s(\Sigma(u)) = 0\right\}.
\]
Define 
\[
	\mathcal{S}_u \coloneqq \left\{U_{x_0}\colon \mathbb{R}^n \to \mathbb{R} \colon U_{x_0} \text{ is a blow-up of } u \text{ at } x_0 \in \Sigma(u)\right\}.
\]
By the Weiss (almost) monotonicity formula, it follows that $U_{x_0} \in \mathcal{S}_u$ is $\beta(x_0)$-homogeneous and is a minimizer of the Alt--Philips functional frozen at $x_0$, for any $x_0 \in \Sigma(u)$. But we know that
\[
   \mathrm{dim}_\mathcal{H}\left(\Sigma(U_{x_0})\right) \le d-3, 
\]
meaning that 
\[
    \mathcal{H}^s\left(\Sigma(U_{x_0})\right) = 0, \quad \text{for every} \quad s > d-3, 
\]
for every $x_0 \in \Sigma(u)$. By Proposition \ref{lemma:singular-set-blow_up-to-minimizer}, it also follows that
\[
    \mathcal{H}^s\left(\Sigma(u)\right) = 0, \quad \text{for every} \quad s > d-3,
\]
from which the conclusion of the Theorem follows.
\end{proof}

\subsection{Hausdorff measure estimates}

In this subsection, we prove that the free boundary of any local minimizer of~\eqref{functional:alt_philips-varying} has finite \((d-1)\)-dimensional Hausdorff measure. The first step is a decomposition lemma: we split the free boundary into a \emph{good} part, whose \(\mathcal H^{d-1}\)-measure is directly controlled, and a \emph{bad} part, which can be covered by balls with small \((d-1)\)-content. The argument is adapted from~\cite{DSS0}.

\begin{lemma}\label{lemma : take a ball}
Let $u \geq 0$ be a local minimizer to the energy-functional \eqref{functional:alt_philips-varying} in $B_2$. Assume further that Assumption \eqref{assump:opt-regularity-exponents} is in force. There exists $\mu_0 > 0$ such that, if $\|u\|_{L^\infty(B_{4/3})} \leq 1$, then
\[
    \mathcal{H}^{d-1}\left( \left(F(u) \cap B_{\mu}(x_0)\right) \setminus \bigcup_{i=1}^{\mathcal{N}} B_{\delta_i}(x_i) \right) \lesssim \mu^{d-1},
\]
for every $0 < \mu \leq \mu_0$, every $x_0 \in F(u) \cap B_1$, and every finite family of balls $\{B_{\delta_i}(x_i)\}_{i=1}^{\mathcal{N}}$ satisfying
\[
    \sum_{i=1}^{\mathcal{N}} \delta_i^{d-1} \leq \frac{1}{4}\mu^{d-1}.
\]
\end{lemma}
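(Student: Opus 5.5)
We argue by contradiction and compactness, following the scheme of \cite{DSS0}. If $\mu_0$ does not exist, there are minimizers $u_k$ with $\|u_k\|_{L^\infty(B_{4/3})}\le 1$, radii $\mu_k\downarrow 0$, points $x_k\in F(u_k)\cap B_1$ and families of balls $\{B_{\delta_i^k}(x_i^k)\}$ with $\sum_i(\delta_i^k)^{d-1}\le \tfrac14\mu_k^{d-1}$ such that
\[
\mathcal H^{d-1}\Big(\big(F(u_k)\cap B_{\mu_k}(x_k)\big)\setminus \textstyle\bigcup_i B_{\delta_i^k}(x_i^k)\Big)\ge k\,\mu_k^{d-1}.
\]
We rescale by $v_k(X)\coloneqq \mu_k^{-\beta(x_k)}u_k(x_k+\mu_kX)$. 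Each $v_k$ is a local minimizer of an Alt--Phillips functional with coefficients $\delta(x_k+\mu_k\,\cdot)$ and $\gamma(x_k+\mu_k\,\cdot)$. By \eqref{assump:opt-regularity-exponents} these coefficients become almost constant on $B_2$; note that the $\log$-Dini condition on $\gamma$ is exactly what makes the factor $\mu_k^{\gamma(x)-\gamma(x_k)}$ tend to $1$. The growth bound \eqref{assumptions: growth for min} gives uniform $C^{1,\alpha}$-type bounds. Using Lemma~\ref{lemma:compactness}, we pass to a subsequence with $x_k\to\bar x$ and $v_k\to U$, where $U$ is a local minimizer of the functional frozen at $\bar x$, and $F(v_k)\to F(U)$ in the Hausdorff sense. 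The rescaled balls have radii $\delta_i^k/\mu_k$ with $(d-1)$-content at most $\tfrac14$, so the goal becomes a bound on $\mathcal H^{d-1}\big((F(v_k)\cap B_1)\setminus\bigcup_i B_i\big)$ that is uniform in $k$, where $B_i$ denotes the $i$-th rescaled ball.

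Next we control the limit. By Theorem~\ref{thm:dimension-singular-set} applied to $U$, or by the corresponding result for the constant-coefficient problem, we have $\mathcal H^{d-1}(\Sigma(U))=0$. Hence $\Sigma(U)\cap\overline B_1$ is covered by finitely many balls $B_{\rho_j}(Y_j)$ with $\sum_j\rho_j^{d-1}$ as small as we like. The remaining compact set $\mathcal K\coloneqq (F(U)\cap\overline B_1)\setminus\bigcup_j B_{\rho_j}(Y_j)$ lies in $\mathcal R(U)$. As in the proof of Proposition~\ref{lemma:singular-set-blow_up-to-minimizer}, we cover $\mathcal K$ by finitely many balls $B_{r_Z}(Z)$ on which $U$ is $\varepsilon$-flat. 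By local uniform convergence, $v_k$ is also flat there for large $k$. The flatness-implies-regularity theorem (item (II), \cite[Section~7]{ASTU1}), whose constants are uniform because the rescaled coefficients are uniformly close to constants, then shows that $F(v_k)\cap B_{r_Z/2}(Z)$ is a $C^{1,\alpha}$ graph with uniformly bounded norm. Therefore $\mathcal H^{d-1}(F(v_k)\cap B_{r_Z/2}(Z))\lesssim r_Z^{d-1}$ uniformly in $k$.

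The main obstacle is the part of $F(v_k)$ near $\Sigma(U)$, namely the portion inside $\bigcup_j B_{\rho_j}(Y_j)$ that is not removed by the given balls $B_i$. For this part no direct measure bound is available at this stage. The plan is to prove the lemma by an iterative covering rather than by a single-scale contradiction. Define the good part as the union of the regular neighborhoods above, and the bad part as the collection of balls $B_{\rho_j}(Y_j)$, chosen so that $\sum_j\rho_j^{d-1}\le\tfrac14$ after adding the given balls. At each scale, the measure of the good part is bounded by a constant times the number of regular balls. That number depends only on $U$, and it becomes uniform once we use compactness of the class of limits, which consists of minimizers of frozen Alt--Phillips functionals with parameters in a compact set and $L^\infty$ bound $1$. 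To make this uniformity rigorous, we choose $\mu_0$ through a contradiction argument over this compact class. A sequence of counterexamples along which the required number of regular balls blows up would converge to a limit $U$ for which one fixed finite cover works for all large $k$, which contradicts the blow-up. Scaling back yields the stated bound $\lesssim\mu^{d-1}$.
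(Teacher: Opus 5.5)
There is a genuine gap, and it is exactly where you write that ``no direct measure bound is available''.

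You negate the statement using a \emph{prescribed} family $\{B_{\delta_i^k}(x_i^k)\}$, outside of which $F(u_k)\cap B_{\mu_k}(x_k)$ has measure $\ge k\mu_k^{d-1}$. Your second paragraph shows that $F(v_k)$ has bounded measure outside your \emph{own} cover $\bigcup_j B_{\rho_j}(Y_j)$ of $\Sigma(U)$. That contradicts nothing, because the part of $F(v_k)$ lying in $\bigcup_j B_{\rho_j}(Y_j)$ but outside the rescaled prescribed balls is never estimated.

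The third paragraph does not repair this:
\begin{itemize}
\item The budget ``$\sum_j\rho_j^{d-1}\le\frac14$ after adding the given balls'' is impossible in general, since the prescribed balls may already use up all of $\frac14$. Moreover, excising extra balls changes the quantity you are supposed to bound.
\item The compactness over the class of frozen minimizers only controls the number of regular balls, i.e.\ the good part.
\item No iteration inside the bad balls, no geometric summation and no treatment of the residual set is carried out, so ``scaling back yields the stated bound'' is unsupported.
\end{itemize}
In fact, for the empty family (or any family disjoint from $B_\mu(x_0)$), the prescribed-family statement is literally $\mathcal H^{d-1}(F(u)\cap B_\mu(x_0))\lesssim\mu^{d-1}$. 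That is the full conclusion of Theorem~\ref{thm:Hdminus1_bound_freebdry}, and no single blow-up step can deliver it.

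The missing idea is the quantifier on the balls: in the paper's proof they are \emph{chosen}, not given. The contradiction hypothesis is that the bound fails for \emph{every} family with $\sum_i\delta_i^{d-1}\le\frac14$. It is defeated by exhibiting one family, namely a finite cover of $\Sigma(v_\infty)\cap B_1$ by balls $B_{\delta_i/2}(x_i)$ with $\sum_i\delta_i^{d-1}\le\frac12$. Outside this cover, $F(v_k)$ is a union of uniformly $C^{1,\alpha}$ graphs converging in $C^1$ to $F(v_\infty)$, so by the area formula its measure stays bounded.

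This existential form is also how the lemma is used in Theorem~\ref{thm:Hdminus1_bound_freebdry}, where one applies Lemma~\ref{lemma : take a ball} \emph{to obtain} a family of balls. The region near $\Sigma$ is handled there by the iteration: the bad content decays like $4^{-k}$, the good parts sum geometrically, and the residual set is $\mathcal H^{d-1}$-null.

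So if you restate your contradiction hypothesis as ``for every admissible family'', your second paragraph already proves the lemma, essentially as the paper does. It is in one respect more careful than the paper: letting $x_k$ and $u_k$ vary is what makes $\mu_0$ and the implicit constant independent of the center, which the iteration needs. The paper's sequence does not give this, since there only $\mu_k$ varies and $x_0$ is fixed.

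If instead you keep the prescribed-family reading, you must first prove the existential version and then run the whole iteration of Theorem~\ref{thm:Hdminus1_bound_freebdry} at every scale $\mu\le\mu_0$ and every center. Your last paragraph only gestures at this.
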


\begin{proof}
Define
\[
    v_\mu(x) \coloneqq \mu^{-\beta(x_0)}u(x_0+\mu x),
\]
and observe that \(v_\mu\) is a local minimizer of a similar functional with parameters $\delta_\mu(x)$ and $\gamma(x)$, with \(\delta_\mu \to \delta(x_0)\) and \(\gamma_\mu \to \gamma(x_0)\) as \(\mu \to 0\). In this setting, it suffices to prove that
\[
    \mathcal{H}^{d-1}\left(\left(F(v_\mu)\cap B_1\right)\setminus \bigcup_{i=1}^{\mathcal N} B_{\delta_i}(x_i)\right) \lesssim 1.
\]
Assume, seeking a contradiction, that this is false. Then, we can find a sequence \(\mu_k \to 0\) such that \(v_k \coloneqq v_{\mu_k}\) satisfies
\begin{equation}\label{eq: contradiction assumption hausdorff}
    \mathcal{H}^{d-1}\left(\left(F(v_k)\cap B_1\right)\setminus \bigcup_{i=1}^{\mathcal N} B_{\delta_i}(x_i)\right) \geq k,
\end{equation}
for every finite collection of balls \(\{B_{\delta_i}(x_i)\}_{i=1}^{\mathcal N}\) such that
\[
    \sum_{i=1}^{\mathcal N} \delta_i^{d-1} \leq \frac{1}{4}.
\]
Passing to a subsequence, if necessary, we obtain \(v_k \to v_\infty\) in $C^{1,\frac{\gamma(x_0)}{2-\gamma(x_0)}}_{loc}(\mathbb{R}^d)$, where \(v_\infty\), by Lemma \ref{lemma:compactness}, is a local minimizer in \(B_2\) to the Alt--Philips functional frozen at $x_0$
\[
    \int \left(\frac{|\nabla v|^2}{2}+\delta(x_0)v^{\gamma(x_0)}\right)\,dx.
\]
From the classical theory, we know that \(\mathcal{H}^{d-1}(\Sigma(v_\infty)) = 0\). Therefore,
\[
    \Sigma(v_\infty) \cap B_1 \subset \bigcup_{i=1}^{\mathcal{N}_0} B_{\delta_i/2}(x_i), \qquad \text{with} \qquad \sum_{i=1}^{\mathcal{N}_0} \delta_i^{d-1} \leq \frac{1}{2},
\]
for some finite collection of balls \(\{B_{\delta_i/2}(x_i)\}_{i=1}^{\mathcal{N}_0}\), where \(x_i \in \Sigma(v_\infty)\). Define
\[
    S_k \coloneqq \left(F(v_k) \cap B_1\right) \setminus \bigcup_{i=1}^{\mathcal{N}_0} B_{\delta_i/2}(x_i).
\]
By the convergence of \(F(v_k)\) to \(F(v_\infty)\), it follows that, for \(k\) sufficiently large, the set \(S_k\) lies in a neighborhood of \(\mathcal{R}(v_\infty)\). Since \(\mathcal{R}(v_\infty)\) is locally smooth by the classical theory, and \(v_\infty\) satisfies the flatness condition in a sufficiently small neighborhood of each point of \(\mathcal{R}(v_\infty)\), the same is true for \(v_k\) when \(k\) is large enough. Hence, by the flatness implies regularity theorem from \cite{ASTU1}, each \(S_k\) can be locally parametrized as the graph of a \(C^{1,\alpha}\) function \(g_k\), with universally bounded \(C^{1,\alpha}\) norm. Passing to a subsequence if necessary, we then obtain that \(S_k\) converges in \(C^1\) to
\[
    \left(F(v_\infty) \cap B_1\right) \setminus \bigcup_{i=1}^{\mathcal{N}_0} B_{\delta_i/2}(x_i).
\]
It then follows from the area formula that
\[
    \mathcal{H}^{d-1}\left(\left(F(v_k) \cap B_1\right) \setminus \bigcup_{i=1}^{\mathcal{N}_0} B_{\delta_i/2}(x_i)\right) \to \mathcal{H}^{d-1}\left(\left(F(v_\infty) \cap B_1\right) \setminus \bigcup_{i=1}^{\mathcal{N}_0} B_{\delta_i/2}(x_i)\right),
\]
which contradicts \eqref{eq: contradiction assumption hausdorff} for \(k\) sufficiently large.
\end{proof}

A proper iteration of Lemma \ref{lemma : take a ball} gives the Hausdorff measure estimate for the free boundary.

\begin{theorem}\label{thm:Hdminus1_bound_freebdry}
Let $u \geq 0$ be a local minimizer to the energy-functional \eqref{functional:alt_philips-varying} in $B_2$. Assume further that Assumption \eqref{assump:opt-regularity-exponents} is in force. If $\|u\|_{L^\infty(B_{4/3})} \leq 1$, then
\[
    \mathcal H^{d-1}\bigl(F(u)\cap B_1\bigr)\lesssim 1.
\]
\end{theorem}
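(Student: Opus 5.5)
The plan is to iterate Lemma~\ref{lemma : take a ball} in the same way the covering claim was iterated in the proof of Proposition~\ref{lemma:singular-set-blow_up-to-minimizer}. The aim is to show that $F(u)\cap B_1$ splits into a good part with controlled measure and a bad part whose $(d-1)$-content decays geometrically.

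First cover $F(u)\cap B_1$ by finitely many balls $B_{\mu_0}(y_j)$ with $y_j\in F(u)\cap B_1$. The number of balls depends only on $d$ and $\mu_0$, so it is universal. It therefore suffices to bound $\mathcal H^{d-1}(F(u)\cap B_{\mu_0}(y))$ for a single such center $y$.

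For this, we construct inductively finite families $\mathcal F_k=\{B_{\rho_i}(z_i)\}$ with $z_i\in F(u)$, which we call the bad balls at generation $k$. They satisfy $\sum_{\mathcal F_k}\rho_i^{d-1}\le 4^{-k}\mu_0^{d-1}$. Moreover,
\[
    \mathcal H^{d-1}\Bigl(\bigl(F(u)\cap B_{\mu_0}(y)\bigr)\setminus \textstyle\bigcup_{\mathcal F_{k}}B_{\rho_i}(z_i)\Bigr)\lesssim \sum_{m<k}\sum_{\mathcal F_m}\rho_i^{d-1}.
\]
We start with $\mathcal F_0=\{B_{\mu_0}(y)\}$. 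At each step, for every ball $B_{\rho_i}(z_i)\in\mathcal F_k$ we apply Lemma~\ref{lemma : take a ball} with $\mu=\rho_i\le\mu_0$ and $x_0=z_i$. If a center lies outside $B_1$, we use the lemma's rescaled form; this is harmless because $\|u\|_{L^\infty(B_{4/3})}\le1$ and $\mu_0$ can be taken small. The lemma produces balls $\{B_{\delta_l}(x_l)\}$ with $\sum\delta_l^{d-1}\le\frac14\rho_i^{d-1}$. Outside these balls, $F(u)\cap B_{\rho_i}(z_i)$ has measure $\lesssim\rho_i^{d-1}$. We recenter any ball that meets $F(u)$ at a free boundary point, doubling its radius; this costs only a dimensional factor, which we absorb by applying the lemma with a slightly smaller constant than $\frac14$. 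The children form $\mathcal F_{k+1}$, and the new good part adds at most $C\sum_{\mathcal F_k}\rho_i^{d-1}$.

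Summing the geometric series gives
\[
    \mathcal H^{d-1}\Bigl(\bigl(F(u)\cap B_{\mu_0}(y)\bigr)\setminus \textstyle\bigcup_{\mathcal F_k}\Bigr)\lesssim \mu_0^{d-1}.
\]
The remaining set $E_\infty:=\bigcap_k\bigcup_{\mathcal F_k}B_{\rho_i}(z_i)$ has $(d-1)$-dimensional Hausdorff content at most $C4^{-k}$ for every $k$. The radii also go to zero, since each radius is at most $(4^{-k}\mu_0^{d-1})^{1/(d-1)}$. Hence $\mathcal H^{d-1}(E_\infty)=0$, and letting $k\to\infty$ gives the bound.

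The main obstacle is to ensure that Lemma~\ref{lemma : take a ball} really applies uniformly at every generation. Its constant and $\mu_0$ must be independent of the center $x_0\in F(u)$ and of the scale. This is where the uniform compactness argument in its proof is used, together with the uniform moduli $\omega_\delta$ and $\omega_\gamma$ from \eqref{assump:opt-regularity-exponents}. It also needs the recentering step to preserve the summability constant. If the $\frac14$ threshold proves too tight after recentering, the first thing to try is to rerun the lemma's contradiction argument with the threshold replaced by $4^{-d}$; its proof is insensitive to that choice.
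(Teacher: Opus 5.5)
Your proposal is correct and follows essentially the same route as the paper: iterate Lemma~\ref{lemma : take a ball} generation by generation, collect the good pieces (whose $\mathcal H^{d-1}$-measures form a geometric series), and show that the residual set $\bigcap_k\bigcup_{\mathcal F_k}B_{\rho_i}(z_i)$ is $\mathcal H^{d-1}$-null because both the radii and $\sum\rho_i^{d-1}$ tend to zero. Your extra bookkeeping is legitimate and fills in details the paper's proof glosses over, since the paper applies the lemma directly to $F(u)\cap B_1$ and to balls not necessarily centered on $F(u)$: the initial cover by universally many $\mu_0$-balls, recentering children at free-boundary points (which can be kept in $F(u)\cap B_1$), and lowering the threshold $\tfrac14$ to absorb the factor $2^{d-1}$.
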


\begin{proof}
We argue inductively. For each integer $k\ge 0$ we construct a finite index set $\mathcal I_k \subset \mathbb{N}^k$, where $\mathbb{N}^k$ is the product of $\mathbb{N}$ $k$-times, a family of balls
\[
    \mathcal B_k \coloneqq \bigl\{B_{\delta_{\mathbf i}}(x_{\mathbf i}) \colon \mathbf i\in\mathcal I_k\bigr\},
\]
together with sets $\Gamma_{\mathbf i}\subset F(u)\cap B_{\delta_{\mathbf i}}(x_{\mathbf i})$ (for $\mathbf i\in\mathcal I_k$), such that
\begin{align}
F(u)\cap B_1
&\subset \left(\bigcup_{j=0}^{k-1}\ \bigcup_{\mathbf i\in\mathcal I_j}\Gamma_{\mathbf i}\right)\ \cup\ \bigcup_{\mathbf i\in\mathcal I_k} B_{\delta_{\mathbf i}}(x_{\mathbf i}),
\label{eq:cover_level_k}\\[2mm]
\sum_{\mathbf i\in\mathcal I_k}\delta_{\mathbf i}^{d-1}
&\le 4^{-(k+1)} \label{eq:radii_level_k}\\[2mm]
\mathcal{H}^{d-1} \left(\bigcup_{j=0}^{k-1}\ \bigcup_{\mathbf i\in\mathcal I_j}\Gamma_{\mathbf i}\right)\
&\lesssim \sum_{i=0}^{k} 4^{-(i+1)}.
\label{eq:Hausdorff_level_k}
\end{align}
For $k=0$, we apply Lemma~\ref{lemma : take a ball} to $F(u)\cap B_1$ to obtain a finite family of balls
\[
    \bigl\{B_{\delta_{i_0}}(x_{i_0})\bigr\}_{i_0=1}^{\mathcal N_0}
\]
such that
\begin{equation}\label{eq:gen0_radii}
    \sum_{i_0=1}^{\mathcal N_0}\delta_{i_0}^{d-1}\le 4^{-1},
\end{equation}
and, defining
\[
    \Gamma_0 \coloneqq \bigl(F(u)\cap B_1\bigr)\setminus \bigcup_{i_0=1}^{\mathcal N_0} B_{\delta_{i_0}}(x_{i_0}),
\]
we have
\begin{equation}\label{eq:gen0_Gamma}
\mathcal H^{d-1}(\Gamma_0)\lesssim 4^{-1},
\qquad
F(u)\cap B_1 \subset \Gamma_0\cup \bigcup_{i_0=1}^{\mathcal N_0} B_{\delta_{i_0}}(x_{i_0}).
\end{equation}
Setting $\mathcal I_0\coloneqq \{1,\dots,\mathcal N_0\}$, we have \eqref{eq:cover_level_k}--\eqref{eq:Hausdorff_level_k} hold for $k=0$, with the convention that
\[
	\Gamma_0 \coloneqq \bigcup_{j=0}^{k-1}\ \bigcup_{\mathbf i\in\mathcal I_j}\Gamma_{\mathbf i}  \quad \text{for} \quad k = 0.
\]

Assume now that $\mathcal I_k$ and $\mathcal B_k$ have been constructed for some $k\ge 1$ and that~\eqref{eq:cover_level_k}--\eqref{eq:Hausdorff_level_k} hold. Fix $\mathbf i\in\mathcal I_k$, and apply Lemma~\ref{lemma : take a ball} to $F(u)\cap B_{\delta_{\mathbf i}}(x_{\mathbf i})$. We obtain a finite family of sub-balls
\[
    \Bigl\{B_{\delta_{\mathbf i,j}}(x_{\mathbf i,j})\Bigr\}_{j=1}^{\mathcal N_{\mathbf i}},
\]
and setting
\[
    \Gamma_{\mathbf i}\coloneqq \bigl(F(u)\cap B_{\delta_{\mathbf i}}(x_{\mathbf i})\bigr)\setminus
    \bigcup_{j=1}^{\mathcal N_{\mathbf i}}B_{\delta_{\mathbf i,j}}(x_{\mathbf i,j}),
\]
satisfying the  bounds
\begin{equation}\label{eq:one_step_estimates}
    \mathcal H^{d-1}(\Gamma_{\mathbf i})\lesssim 4^{-1}\,\delta_{\mathbf i}^{d-1}, \qquad \sum_{j=1}^{\mathcal N_{\mathbf i}}\delta_{\mathbf i,j}^{d-1}\le 4^{-1}\,\delta_{\mathbf i}^{d-1}.
\end{equation}
Define the next index set and family of balls by
\[
\mathcal I_{k+1}\coloneqq \bigl\{(\mathbf i,j) \colon \mathbf i\in\mathcal I_k,\ 1\le j\le \mathcal N_{\mathbf i}\bigr\},
\quad
\mathcal B_{k+1}\coloneqq \bigl\{B_{\delta_{\mathbf i,j}}(x_{\mathbf i,j}) \colon (\mathbf i,j)\in\mathcal I_{k+1}\bigr\}.
\]
Summing~\eqref{eq:one_step_estimates} over $\mathbf i\in\mathcal I_k$ and using~\eqref{eq:radii_level_k} yields
\[
    \sum_{\mathbf j\in\mathcal I_{k+1}}\delta_{\mathbf j}^{d-1}
\le 4^{-1}\sum_{\mathbf i\in\mathcal I_k}\delta_{\mathbf i}^{d-1}
\le 4^{-1}\cdot 4^{-(k+1)}=4^{-(k+2)},
\]
from which \eqref{eq:cover_level_k}--\eqref{eq:Hausdorff_level_k} at $k+1$ follows.

To estimate $\mathcal H^{d-1}(F(u)\cap B_1)$, it is enough to show that there are sets $G_\infty$ and $E_\infty$ such that
\begin{equation}\label{eq: inclusion_FB}
    F(u) \cap B_1 \subset G_\infty \cup E_\infty,
\end{equation}
where $\mathcal{H}^{d-1}(G_\infty) \lesssim 1$, and $\mathcal{H}^{d-1}(E_\infty) = 0$, where the sets $G_\infty$ and $E_\infty$ are defined by
\[
    G_\infty = \bigcup_{k=1}^\infty G_k, \quad \text{where} \quad G_k \coloneqq \bigcup_{j=0}^{k-1}\ \bigcup_{\mathbf i\in\mathcal I_j}\Gamma_{\mathbf i},
\]
and
\[
    E_\infty = \bigcap_{k=1}^\infty E_k, \quad \text{where} \quad E_k = \bigcup_{\mathbf i\in\mathcal I_k} B_{\delta_{\mathbf i}}(x_{\mathbf i}). 
\]
Indeed, let $x \in F(u) \cap B_1$. If there is $k \in \mathbb{N}$ such that $x \in G_k$, then so it is in $G_\infty$. If there is no $k \in \mathbb{N}$ such that $x \in G_k$, then it means that $x \in E_k$ for every $k \in \mathbb{N}$, which implies that $x \in E_\infty$. This proves the inclusion \eqref{eq: inclusion_FB}. Finally, we argue that $\mathcal{H}^{d-1}(G_\infty) \lesssim 1$, and $\mathcal{H}^{d-1}(E_\infty) = 0$. While $\mathcal{H}^{d-1}(G_\infty) \lesssim 1$ follows from the fact that $G_k$ is a increasing sequence of sets with, by \eqref{eq:Hausdorff_level_k}, $\mathcal{H}^{d-1}(G_k) \lesssim 1$, to show that $\mathcal{H}^{d-1}(E_\infty) = 0$ we recall that
\[
   \mathcal{H}^{d-1}(E_\infty) = \lim_{\eta \to 0} \mathcal{H}_\eta^{d-1}(E_\infty), 
\]
where
\[
    \mathcal{H}_\eta^{d-1}(E_\infty) \coloneqq \inf \left\{\sum_{i=1}^{\infty} \mathrm{diam}(U_i)^{d-1} \colon E_\infty \subset \bigcup_{i=1}^\infty  U_i,\, \mathrm{diam}(U_i) \le \eta \right\}.
\]
But notice that $E_\infty \subset E_k$ for every $k \in \mathbb{N}$, and by \eqref{eq:radii_level_k} we have 
\[
   E_k = \bigcup_{\mathbf i\in\mathcal I_k} B_{\delta_{\mathbf i}}(x_{\mathbf i}), \quad \text{with} \quad \sum_{\mathbf i\in\mathcal I_k}\delta_{\mathbf i}^{d-1} \le 4^{-(k+1)}.
\]
Therefore,
\[
    \mathcal{H}_\eta^{d-1}(E_\infty) \subset \mathcal{H}_\eta^{d-1}(E_k) \le 4^{-(k+1)},
\]
as long as $k = k(\eta)$ is large enough so that $2\delta_{\mathbf i} \le \eta$ for ${\mathbf i} \in \mathcal{I}_k$, and from which follows that $\lim_{\eta \to 0} \mathcal{H}_\eta^{d-1}(E_\infty) = 0$.
\end{proof}

\section{Further extensions}\label{sct:some-extensions}

In this section, we bring some extensions of the almost monotonicity formula that include the one-phase Alt-Caffarelli problem which was introduced in \cite{AC}, its two-phase version, and the Alt--Philips problem with varying coefficients.

\subsection{The Alt--Caffarelli one-phase problem}

Let $Q$ and $A$ be continuous functions, bounded from below away from zero. The Alt--Caffarelli functional is defined as
\begin{equation}\label{func:AC}
    \mathcal J(u) \coloneqq \int \left( \frac{1}{2} \, \langle A(x)\nabla u, \nabla u \rangle + Q(x)\mathcal{X}_{\{u>0\}}\right)\,dx.
\end{equation}

Here is the almost monotonicity formula.

\begin{theorem}\label{thm : almost mon formula}
Let $u \geq 0$ be an almost minimizer to the functional \eqref{func:AC} in $B_2$. Assume further that
\[
	\omega_A(\cdot) + \omega_Q(\cdot) + \varrho(\cdot) \in C^{0,dini} \qquad \text{and} \qquad A(x_0) = I_d,
\]
for $x_0 \in F(u)\cap B_1$, and that
\[
	\int_{B_r(x_0)} |Du|^2\,dx \lesssim r^d, \quad \text{for} \quad r<1.
\]  
Then,
\[
    \mathcal{A}_{u,x_0}(r) \coloneqq r^{-d}\mathcal{J}_{x_0}(u,B_r(x_0)) - \frac{1}{2}r^{-d - 1}\int_{\partial B_r(x_0)}u^2\, d\mathcal{H}^{d-1},
\]
is almost monotone, in the sense that there exists a nonnegative real function $g \in L^1(0,1)$ such that
$$
	\frac{d}{dr} \mathcal{A}_{u,x_0}(r) \geq -g(r).
$$
\end{theorem}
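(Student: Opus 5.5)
The plan is to repeat the proof of Theorem~\ref{thm:almost-monotonicity} with $\beta_0$ replaced by $1$ and the potential $\delta(0)u^{\gamma_0}$ replaced by $Q(0)\chi_{\{u>0\}}$. We take $x_0=0$. Differentiating $\mathcal{A}_{u,0}$, we write
\[
\frac{d}{dr}\mathcal{A}_{u,0}(r)=\mathcal{B}(u)-d\,r^{-d-1}\mathcal{J}_0(u,B_r)+\mathcal{H}_r(u),
\]
where
\[
\mathcal{B}(u)=r^{-d}\int_{\partial B_r}\Big(\tfrac12|\nabla_\tau u|^2+Q(0)\chi_{\{u>0\}}\Big)\,d\mathcal{H}^{d-1}+\tfrac12 r^{-d-2}\int_{\partial B_r}u^2\,d\mathcal{H}^{d-1}
\]
and
\[
\mathcal{H}_r(u)=\tfrac12 r^{-d}\int_{\partial B_r}\bigl(\partial_\nu u-u/r\bigr)^2\,d\mathcal{H}^{d-1}\ge 0.
\]
Next we let $w(x)=\frac{|x|}{r}u(rx/|x|)$ be the $1$-homogeneous extension. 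Its Weiss energy is constant, $\mathcal{H}_r(w)=0$, and $w=u$ on $\partial B_r$, so $\mathcal{B}(u)=\mathcal{B}(w)=d\,r^{-d-1}\mathcal{J}_0(w,B_r)$. The positivity set of $w$ is the cone over $\{u>0\}\cap\partial B_r$, so the characteristic-function term of $w$ on $\partial B_r$ matches that of $u$. This holds for a.e.\ $r$, which is enough since we only need a differential inequality a.e.\ (equivalently, the integrated version).

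We then use almost minimality with competitor $w$, together with $\mathcal{J}\ge 0$, to get $(1-\varrho(r))\mathcal{J}(u,B_r)\le\mathcal{J}(w,B_r)$. Splitting $\mathcal{J}_0=\mathcal{J}+(\mathcal{J}_0-\mathcal{J})$ for both $u$ and $w$ gives
\begin{align*}
\frac{d}{dr}\mathcal{A}_{u,0}(r)\ge{}&\mathcal{H}_r(u)-d\,r^{-d-1}\varrho(r)\mathcal{J}(u,B_r)\\
&+d\,r^{-d-1}\Big[(\mathcal{J}_0-\mathcal{J})(w,B_r)-(\mathcal{J}_0-\mathcal{J})(u,B_r)\Big].
\end{align*}

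The error terms are controlled by the energy bound $\int_{B_r}|Du|^2\lesssim r^d$ together with $|B_r|\lesssim r^d$. First, $\mathcal{J}(u,B_r)\lesssim r^d$, so the $\varrho$-term is $\gtrsim-\varrho(r)/r$. Second, the $Q$-differences satisfy
\[
\int_{B_r}|Q(x)-Q(0)|\bigl(\chi_{\{u>0\}}+\chi_{\{w>0\}}\bigr)\,dx\lesssim\omega_Q(r)\,r^d.
\]
Third, the $A$-differences are bounded by $\omega_A(r)\bigl(\int_{B_r}|\nabla u|^2+\int_{B_r}|\nabla w|^2\bigr)$.

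The main obstacle is this last step: controlling $\int_{B_r}|\nabla w|^2$. The growth bound gives no pointwise gradient control; it only bounds the interior Dirichlet energy of $u$, whereas $\int_{B_r}|\nabla w|^2=\frac{r}{d}\int_{\partial B_r}\bigl(|\nabla_\tau u|^2+u^2/r^2\bigr)$ involves boundary data. I would first try to avoid a pointwise bound by working with the integrated formula. For the $u^2/r^2$ part, Lipschitz-type growth $u\lesssim r$ near free boundary points (which follows in the standard way from the energy bound and subharmonicity) gives $r^{-d-1}\cdot r\int_{\partial B_r}u^2/r^2\lesssim 1/r$. For the tangential part, I would use $\int_{\partial B_r}|\nabla u|^2\,d\mathcal{H}^{d-1}=\frac{d}{dr}\int_{B_r}|\nabla u|^2$. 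Then
\[
\int_s^t\frac{\omega_A(r)}{r^{d}}\frac{d}{dr}\Big(\int_{B_r}|\nabla u|^2\Big)\,dr
\]
can be integrated by parts, using the energy bound, and is dominated by
\[
\omega_A(t)+\int_s^t\frac{\omega_A(r)}{r}\,dr+\int_s^t\frac{|\omega_A'(r)|}{1}\,dr,
\]
after possibly replacing $\omega_A$ by a comparable monotone Dini modulus. Since the almost monotonicity is only used in integrated form, this suffices. Alternatively, one can assume Lipschitz regularity $|\nabla u|\lesssim 1$, which is the analogue of \eqref{assumptions: growth for min}; then the bound is immediate and $g=C\bigl(\omega_A+\omega_Q+\varrho\bigr)/r\in L^1(0,1)$ by the Dini assumption.
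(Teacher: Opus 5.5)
Your proof is correct, and its skeleton is the paper's. You use the same decomposition $\frac{d}{dr}\mathcal{A}_{u,0}=\mathcal{B}(u)-d\,r^{-d-1}\mathcal{J}_0(u,B_r)+\mathcal{H}_r(u)$ and the same $1$-homogeneous extension $w$, giving $\mathcal{B}(u)=d\,r^{-d-1}\mathcal{J}_0(w,B_r)$. You also use $(1-\varrho(r))\mathcal{J}(u,B_r)\le\mathcal{J}(w,B_r)$ and reach the same key inequality. Your weights $r^{-d-2}$ in $\mathcal{B}$ and $r^{-d}$ in $\mathcal{H}_r$ are the scale-correct ones; the paper's displayed $r^{-d-1}$ are misprints.

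The two arguments differ only at the Dirichlet energy of $w$. The paper asserts $\int_{B_r}|\nabla w|^2\le C_0r^d$ ``by our assumptions'' and concludes with the explicit $g=C(\omega_A+\omega_Q+\varrho)/r$. As you observe, $\int_{B_r}|\nabla w|^2=\frac{r}{d}\int_{\partial B_r}(|\nabla_\tau u|^2+u^2/r^2)$ is a spherical quantity. The bulk hypothesis $\int_{B_r}|\nabla u|^2\lesssim r^d$ does not control it radius by radius, so the paper's step tacitly uses a Lipschitz bound. That is your option (b), which is available for local minimizers, as the remark after the corollary notes.

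Your option (a) removes the need for Lipschitz regularity. With $E(r)=\int_{B_r}|\nabla u|^2\le Cr^d$ and $\omega_A$ nondecreasing, you get $\int_{2^{-k-1}}^{2^{-k}}\omega_A(r)\,r^{-d}E'(r)\,dr\lesssim\omega_A(2^{-k})$, which is summable by the Dini condition. This gives more than an ``integrated form'' of the result. It shows that the nonnegative function $r\mapsto\omega_A(r)\,r^{-d}\int_{\partial B_r}|\nabla u|^2\,d\mathcal{H}^{d-1}$ lies in $L^1(0,1)$. You can therefore add it to $g$, and the theorem holds verbatim under its stated hypotheses. The cost is that your $g$ depends on $u$, whereas the Lipschitz route gives a clean $g$ depending only on the moduli.

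One small fix: for almost minimizers you should not appeal to subharmonicity to get $u\lesssim r$, and you do not need to. The energy bound, $u(x_0)=0$, and a Poincar\'e/Campanato telescoping of the averages over $B_{2^{-j}\rho}$ give $\int_{B_\rho}u^2\lesssim\rho^{d+2}$. The same dyadic summation then puts $\omega_A(r)\,r^{-d-2}\int_{\partial B_r}u^2\,d\mathcal{H}^{d-1}$ in $L^1(0,1)$.
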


\begin{proof}
Fix $x_0\in F(u)$ and assume, without loss of generality, that $x_0=0$ and $B_r\Subset B_1$ for every $r\in(0,1)$.
Set
\[
    A_0 \coloneqq A(0) = I_d, \qquad Q_0\coloneqq Q(0)>0,
\]
and define the \emph{frozen} functional at $0$ by
\[
    \mathcal J_{0}(v,E)\coloneqq \int_{E}\left(\frac12|\nabla v|^2+Q_0\,\chi_{\{v>0\}}\right)\,dx, \qquad E\subset \mathbb R^d.
\]
Then the Weiss-type quantity becomes
\[
    \mathcal A_{u,0}(r)=r^{-d}\mathcal J_0(u,B_r)-\frac12\,r^{-d-1}\int_{\partial B_r}u^2\,d\mathcal H^{d-1}.
\]
We calculate $\frac{d}{dr} A_{u,0}(r)$. Using the co-area formula, we have
\[
    \frac{d}{dr}\,\mathcal J_0(u,B_r)=\mathcal J_0(u,\partial B_r)
    \coloneqq \int_{\partial B_r}\left(\frac12|\nabla u|^2+Q_0\,\chi_{\{u>0\}}\right)\,d\mathcal H^{d-1},
\]
and so a direct computation yields
\begin{align*}
    \frac{d}{dr}\mathcal A_{u,0}(r)
    &= -d\,r^{-d-1}\mathcal J_0(u,B_r)+r^{-d}\mathcal J_0(u,\partial B_r) \\
    &\quad +\frac12(d+1)\,r^{-d-2}\int_{\partial B_r}u^2\,d\mathcal H^{d-1}
    - r^{-d-1}\int_{\partial B_r}u\,\partial_\nu u\,d\mathcal H^{d-1}.
\end{align*}
On $\partial B_r$, decompose $\nabla u = (\partial_\nu u)\,\nu + D_\tau u$, so that $|\nabla u|^2=|\partial_\nu u|^2+|D_\tau u|^2$.
By elementary algebra,
\[
    \frac12|\partial_\nu u|^2-\frac{u\,\partial_\nu u}{r}+\frac12\frac{u^2}{r^2}
    =\frac12\Big(\partial_\nu u-\frac{u}{r}\Big)^2.
\]
Collecting terms, we obtain
\begin{equation}\label{eq:Weiss-derivative}
    \frac{d}{dr}\mathcal A_{u,0}(r) = \mathcal B(u)-d\,r^{-d-1}\mathcal J_0(u,B_r)+\mathcal H(u),
\end{equation}
where
\begin{align*}
\mathcal B(u)
&\coloneqq r^{-d}\int_{\partial B_r}\Big(\tfrac12|D_\tau u|^2+Q_0\,\chi_{\{u>0\}}\Big)\,d\mathcal H^{d-1}
+\frac12\,r^{-d-1}\int_{\partial B_r}u^2\,d\mathcal H^{d-1},\\
\mathcal H(u)
&\coloneqq \frac12\,r^{-d-1}\int_{\partial B_r}\Big(\partial_\nu u-\frac{u}{r}\Big)^2\,d\mathcal H^{d-1}\ge 0.
\end{align*}
Let $w$ be the $1$-homogeneous extension of $u|_{\partial B_r}$ to $B_r$, namely
\[
    w(x)\coloneqq \frac{|x|}{r}\,u\!\left(r\frac{x}{|x|}\right)\quad \text{for} \quad x \not = 0, \quad \text{and}\quad w(0)\coloneqq 0.
\]
Then, since $w$ is $1$-homogeneous, $\mathcal A_{w,0}(s)$ is constant for $0<s\le r$; in particular,
\[
    0 = \frac{d}{ds}\mathcal A_{w,0}(s)\Big|_{s=r}.
\]
Applying \eqref{eq:Weiss-derivative} to $w$ and using that $\mathcal H(w) = 0$, we get
\[
    0 = \mathcal B(w)-d\,r^{-d-1}\mathcal J_0(w,B_r).
\]
Since $w=u$ on $\partial B_r$, we also have $\mathcal B(u)=\mathcal B(w)$, hence
\begin{equation}\label{eq:B-u-frozen-w}
\mathcal B(u)=d\,r^{-d-1}\mathcal J_0(w,B_r).
\end{equation}
Now we compare with the original functional. By the almost minimality of $u$, for every competitor $v$ with $v-u\in H^1_0(B_r)$,
\[
	\mathcal J(u,B_r)\le (1+\varrho(r))\,\mathcal J(v,B_r).
\]
Taking $v=w$, and using $\mathcal J\ge 0$, we obtain
\begin{equation}\label{eq:almost-min-lower}
	\mathcal J(w,B_r)\ge (1-\varrho(r))\,\mathcal J(u,B_r),
\end{equation}
since $(1+\varrho)^{-1}\ge 1-\varrho$ for $\varrho\ge 0$. Next, we compare 
\[
\begin{aligned}
	\mathcal J_0(v,B_r)-\mathcal J(v,B_r)
	&=\frac12\int_{B_r}\big\langle (I_d-A(x))\nabla v,\,\nabla v\big\rangle\,dx \\
	&\quad +\int_{B_r}\big(Q_0-Q(x)\big)\,\chi_{\{v>0\}}\,dx.
\end{aligned}
\]
Combining this with \eqref{eq:B-u-frozen-w} and \eqref{eq:almost-min-lower} gives
\begin{align*}
	\mathcal B(u)
	&= d\,r^{-d-1}\mathcal J_0(w,B_r) \\
	&= d\,r^{-d-1}\mathcal J(w,B_r)
	+d\,r^{-d-1}\big(\mathcal J_0(w,B_r)-\mathcal J(w,B_r)\big)\\
	&\ge d\,r^{-d-1}(1-\varrho(r))\,\mathcal J(u,B_r)
	+d\,r^{-d-1}\big(\mathcal J_0(w,B_r)-\mathcal J(w,B_r)\big).
\end{align*}
Insert this lower bound for $\mathcal B(u)$ into \eqref{eq:Weiss-derivative} to obtain
\begin{align}
\frac{d}{dr}\mathcal A_{u,0}(r)
&\ge \mathcal H(u)
-d\,r^{-d-1}\varrho(r)\,\mathcal J(u,B_r)
+d\,r^{-d-1}\big(\mathcal J(u,B_r)-\mathcal J_0(u,B_r)\big)\notag\\
&\quad +d\,r^{-d-1}\big(\mathcal J_0(w,B_r)-\mathcal J(w,B_r)\big). \label{eq:key-ineq}
\end{align}
Finally, we estimate the error terms. For $x\in B_r$, by continuity of $A$ and $Q$,
\[
\|A(x)-I_d\|\le \omega_A(r),
\qquad
|Q(x)-Q_0|\le \omega_Q(r).
\]
Hence, for any $v$,
\[
\big|\mathcal J(v,B_r)-\mathcal J_0(v,B_r)\big|
\le \frac12\,\omega_A(r)\int_{B_r}|\nabla v|^2\,dx + \omega_Q(r)\,|B_r|.
\]
Applying this to $v=u$ and $v=w$ and using \eqref{eq:key-ineq} yields
\begin{align*}
\frac{d}{dr}\mathcal A_{u,0}(r)
&\ge \mathcal H(u)
-d\,r^{-d-1}\varrho(r)\,\mathcal J(u,B_r)\\
& - C\,r^{-d-1}\omega_A(r)\!\int_{B_r}\big(|\nabla u|^2+|\nabla w|^2\big)\,dx
- C\,\frac{\omega_Q(r)}{r},
\end{align*}
for a dimensional constant $C$.

Finally, by our assumptions, we have
\[
\int_{B_r}|\nabla u|^2\,dx + \int_{B_r}Q(x)\chi_{\{u>0\}}\,dx \le C_0\,r^{d},
\qquad
\int_{B_r}|\nabla w|^2\,dx \le C_0\,r^{d}.
\]
Therefore,
\[
r^{-d-1}\varrho(r)\mathcal J(u,B_r)\le C\,\frac{\varrho(r)}{r},
\quad
r^{-d-1}\omega_A(r)\!\int_{B_r}\big(|\nabla u|^2+|\nabla w|^2\big)\,dx \le C\,\frac{\omega_A(r)}{r}.
\]
We conclude that
\[
\frac{d}{dr}\mathcal A_{u,0}(r)
\ge \mathcal H(u) - C\,\frac{\omega_A(r)+\omega_Q(r)+\varrho(r)}{r}
\ge -\,g(r),
\]
where
\[
g(r)\coloneqq C\,\frac{\omega_A(r)+\omega_Q(r)+\varrho(r)}{r}\ge 0.
\]
Since $\omega_A+\omega_Q+\varrho\in C^{0,\mathrm{dini}}([0,1])$, we have $\int_0^1 \frac{\omega_A(t)+\omega_Q(t)+\varrho(t)}{t}\,dt<\infty$, hence $g\in L^1(0,1)$. This proves the almost monotonicity of $\mathcal A_{u,0}(r)$.
\end{proof}

\begin{corollary}
Let $u \geq 0$ be a local minimizer to the functional \eqref{func:AC} in $B_2$. Assume further that
\[
	\omega_A(\cdot) + \omega_Q(\cdot) + \varrho(\cdot) \in C^{0,dini} \qquad \text{and} \qquad A(x_0) = I_d,
\]
for $x_0 \in F(u)\cap B_1$. Then, blow-ups are $1$-homogeneous.
\end{corollary}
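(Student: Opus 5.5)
The argument follows the corollary of Theorem~\ref{thm:almost-monotonicity}, with Theorem~\ref{thm : almost mon formula} now playing the role of the monotonicity formula. Take $x_0=0$. If $A(0)\neq I_d$ one first performs the change of variables $v(x)=u(A(0)^{1/2}x)$, exactly as before; this preserves Dini continuity of the coefficients and local minimality. Consider the blow-ups $u_r(x)=r^{-1}u(rx)$. The energy bound $\int_{B_r}|\nabla u|^2\lesssim r^d$ is assumed in Theorem~\ref{thm : almost mon formula}; in the local minimizer setting it comes from the standard Lipschitz estimate. It makes $(u_r)$ bounded in $H^1_{loc}\cap C^{0,1}_{loc}$, so along some $r_j\downarrow0$ we get $u_{r_j}\to u_0$ locally uniformly and weakly in $H^1_{loc}$. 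One then needs an analogue of Lemma~\ref{lemma:compactness} for the Alt--Caffarelli functional. The only additional ingredient is the $L^1_{loc}$ convergence $\chi_{\{u_{r_j}>0\}}\to\chi_{\{u_0>0\}}$ together with strong $H^1_{loc}$ convergence, and both follow from nondegeneracy in the usual way. This shows that $u_0$ is a local minimizer of the frozen functional $\mathcal J_0$ with constants $I_d$ and $Q(0)$.

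Next I integrate the inequality $\frac{d}{dr}\mathcal A_{u,0}\ge -g$ with $g\in L^1$. This shows that $\mathcal A_{u,0}(r)+\int_0^r g$ is nondecreasing. The Weiss energy is bounded because of the $r^d$ energy bound and the bound $r^{-d-1}\int_{\partial B_r}u^2\lesssim 1$, which comes from Lipschitz continuity and $u(0)=0$. Hence the limit $\mathcal A_{u,0}(0^+)$ exists and is finite. By scaling, $\mathcal A_{u,0}(sr_j)=\mathcal A_{u_{r_j},0}(s)$. Passing to the limit gives $\mathcal A_{u_0,0}(s)=\mathcal A_{u,0}(0^+)$ for every $s>0$, so the blow-up has constant Weiss energy. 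The main obstacle is this limit passage. The Dirichlet term requires strong $H^1$ convergence, the term $Q_0\chi_{\{u>0\}}$ requires $L^1$ convergence of the positivity sets, and the boundary integral requires uniform convergence. Each of these comes from the compactness step, so I expect the bulk of the work to be verifying that compactness step: the competitor/harmonic-replacement argument of Lemma~\ref{lemma:compactness}, where the measure term is handled with the nondegeneracy estimate.

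Finally, since $u_0$ minimizes $\mathcal J_0$ and has constant coefficients, rerunning the proof of Theorem~\ref{thm : almost mon formula} for $u_0$ gives $g\equiv0$ and $\frac{d}{ds}\mathcal A_{u_0,0}(s)\ge\mathcal H_s(u_0)\ge0$. Constancy of $\mathcal A_{u_0,0}$ then forces $\mathcal H_s(u_0)=0$, i.e.\ $\partial_\nu u_0=u_0/s$ on $\partial B_s$ for a.e.\ $s$. Integrating this ODE along rays yields $u_0(\lambda x)=\lambda u_0(x)$. So every blow-up is $1$-homogeneous, in the original variables composed with $A(0)^{1/2}$.
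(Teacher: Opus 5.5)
Your route is the one the paper intends. The paper states this corollary without proof, and it is meant to follow exactly as the corollary in Section~\ref{sct:almost-monotone-formula}, with Theorem~\ref{thm : almost mon formula} replacing Theorem~\ref{thm:almost-monotonicity}. Your handling of the finiteness of $\mathcal{A}_{u,0}(0^+)$, the scaling identity and the ODE along rays is fine.

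The step that fails as you describe it is the one you call the bulk of the work. The harmonic-replacement competitor $v+\varphi_j$ of Lemma~\ref{lemma:compactness} works there because the potential $t\mapsto t^{\gamma}$ is continuous. Here it gives no control of $Q_0\chi_{\{v+\varphi_j>0\}}$, because $\varphi_j$ is small but has no sign. For instance, if $u_{r_j}\ge u_0$ on $\partial B_\rho$ (not identically), then $\varphi_j>0$ in all of $B_\rho$. So $\{v+\varphi_j>0\}=B_\rho$ for every competitor $v\ge 0$, and the limit inequality is useless. Nondegeneracy is a property of the minimizers $u_{r_j}$, not of the competitor, so it cannot repair this.

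The standard fix is the Alt--Caffarelli cutoff competitor $v_j\coloneqq v+(1-\eta)(u_{r_j}-u_0)$, with $\eta\in C^\infty_c(B_\rho)$, $0\le\eta\le 1$, and $\{\eta<1\}$ a thin annulus at $\partial B_\rho$. Then $v_j=u_{r_j}$ on $\partial B_\rho$ and $\chi_{\{v_j>0\}}\le\chi_{\{v>0\}}+\chi_{\{\eta<1\}}$. The Dirichlet terms converge. The left-hand side is handled by lower semicontinuity, since $\chi_{\{u_0>0\}}\le\liminf_j\chi_{\{u_{r_j}>0\}}$ by uniform convergence. Finally let $|\{\eta<1\}|\to 0$.

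Likewise, strong $H^1_{loc}$ convergence comes from the equation, not from nondegeneracy. Since $u_{r_j}\ge0$ solves $\operatorname{div}(A(r_j\cdot)\nabla u_{r_j})=0$ in $\{u_{r_j}>0\}$, one has $\int\phi\,\langle A(r_jx)\nabla u_{r_j},\nabla u_{r_j}\rangle\,dx=-\int u_{r_j}\langle A(r_jx)\nabla u_{r_j},\nabla\phi\rangle\,dx$. The right-hand side converges to $\int\phi|\nabla u_0|^2\,dx$ because $u_0$ is harmonic in $\{u_0>0\}$. Nondegeneracy, together with the Lipschitz bound, is needed only for $|\partial\{u_0>0\}|=0$. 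That in turn gives the $L^1_{loc}$ convergence of the positivity sets used in your limit of the Weiss energies.

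You can also bypass this step entirely, as in Weiss' original argument, by keeping the term $\mathcal{H}$ in the inequality $\frac{d}{dr}\mathcal{A}_{u,0}(r)\ge\mathcal{H}_r(u)-g(r)$ proved in Theorem~\ref{thm : almost mon formula}. By scale invariance,
\[
\int_s^t\mathcal{H}_\tau(u_{r_j})\,d\tau=\int_{sr_j}^{tr_j}\mathcal{H}_r(u)\,dr\le\mathcal{A}_{u,0}(tr_j)-\mathcal{A}_{u,0}(sr_j)+\int_{sr_j}^{tr_j}g(r)\,dr\longrightarrow 0,
\]
because $\mathcal{A}_{u,0}(0^+)$ is finite and $g\in L^1$. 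The left-hand side is a weighted $L^2$ norm of $x\cdot\nabla u_{r_j}-u_{r_j}$ on $B_t\setminus B_s$, with a weight that is a power of $|x|$. It is therefore lower semicontinuous under uniform convergence of $u_{r_j}$ and weak $L^2$ convergence of $\nabla u_{r_j}$. Hence $x\cdot\nabla u_0=u_0$ a.e., and $u_0$ is $1$-homogeneous. This uses only the Lipschitz bound, not minimality of $u_0$, nondegeneracy, or convergence of the positivity sets. Your route, which is the paper's, needs all three, although they are needed anyway later to classify the homogeneous blow-ups.
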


\begin{remark}
We observe that we do not need the function $Q$ to be bounded from below. This is only necessary to ensure nondegeneracy. Also, for local minimizer, Lipschitz regularity is available by the regularity result from \cite{DSS1}.
\end{remark}

\subsection{The two-phase case}

Both Alt--Caffarelli and Alt--Phillips monotonicity formulas can be extended to the two-phase scenario, provided that optimal regularity is still available at the free boundary points where we do the blow-up procedure. For instance, in the Alt--Caffarelli scenarion, we may consider the following functional
\begin{equation}\label{two-phase functional AC}
	\mathcal J(u) \coloneqq \int \left( \frac{1}{2} \, \langle A(x)\nabla u, \nabla u \rangle + Q_1(x)\mathcal{X}_{\{u>0\}} + Q_2(x)\mathcal{X}_{\{u<0\}}\right)\,dx.
\end{equation}

\begin{theorem}\label{thm : almost mon formula 2}
Let $u$ be an almost minimizer to the functional \eqref{two-phase functional AC} in $B_2$. Assume further that
\[
	\omega_A(\cdot) + \omega_{Q_1}(\cdot) + \omega_{Q_2}(\cdot) + \varrho(\cdot) \in C^{0,dini} \qquad \text{and} \qquad A(x_0) = I_d,
\]
for $x_0 \in F(u)\cap B_1$ and that
\[
	\int_{B_r(x_0)} |Du|^2\,dx \lesssim r^d, \quad \text{for} \quad r<1.
\]
Then,
\[
    \mathcal{A}_{u,x_0}(r) \coloneqq r^{-d}\mathcal{J}_{x_0}(u,B_r(x_0)) - \frac{1}{2}r^{-d - 1}\int_{\partial B_r(x_0)}u^2\, d\mathcal{H}^{d-1},
\]
is almost monotone, in the sense that there exists a nonnegative real function $g \in L^1(0,1)$ such that
\[
	\frac{d}{dr} \mathcal{A}_{u,x_0}(r) \geq -g(r).
\]
\end{theorem}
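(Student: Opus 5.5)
The plan is to repeat the proof of Theorem~\ref{thm : almost mon formula} almost verbatim, with the two-phase frozen functional in place of the one-phase one. We normalize $x_0=0$ and set $Q_1^0\coloneqq Q_1(0)$, $Q_2^0\coloneqq Q_2(0)$. The frozen functional is
\[
    \mathcal J_0(v,E)\coloneqq \int_E\Big(\tfrac12|\nabla v|^2+Q_1^0\chi_{\{v>0\}}+Q_2^0\chi_{\{v<0\}}\Big)\,dx .
\]
By the co-area formula, $\frac{d}{dr}\mathcal J_0(u,B_r)=\mathcal J_0(u,\partial B_r)$. The same algebra as before then gives
\[
    \frac{d}{dr}\mathcal A_{u,0}(r)=\mathcal B(u)-d\,r^{-d-1}\mathcal J_0(u,B_r)+\mathcal H(u).
\]
Here $\mathcal B(u)$ collects the tangential gradient, the two phase characteristic functions on $\partial B_r$, and $\tfrac12 r^{-d-1}\int_{\partial B_r}u^2$. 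The term $\mathcal H(u)=\tfrac12 r^{-d-1}\int_{\partial B_r}(\partial_\nu u-u/r)^2\ge 0$ is unchanged. No sign condition on $u$ enters this computation.

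Next, take $w$ to be the $1$-homogeneous extension of $u|_{\partial B_r}$. The key point for two phases is that the extension preserves signs along rays: $\{w>0\}$ and $\{w<0\}$ are exactly the cones over $\{u>0\}\cap\partial B_r$ and $\{u<0\}\cap\partial B_r$. Hence $\mathcal A_{w,0}(s)$ is constant in $s\le r$, and $\mathcal H(w)=0$. Since $u=w$ on $\partial B_r$, the boundary integrands agree, so $\mathcal B(u)=\mathcal B(w)$ and therefore $\mathcal B(u)=d\,r^{-d-1}\mathcal J_0(w,B_r)$. We then use $w$ as a competitor in the almost minimality. Because $\mathcal J\ge0$ (as $Q_i\ge0$), this gives $\mathcal J(w,B_r)\ge(1-\varrho(r))\mathcal J(u,B_r)$, and we arrive at the analogue of \eqref{eq:key-ineq}.

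For the error terms, for any $v$ we have
\[
    |\mathcal J(v,B_r)-\mathcal J_0(v,B_r)|\le\tfrac12\omega_A(r)\int_{B_r}|\nabla v|^2+\big(\omega_{Q_1}(r)+\omega_{Q_2}(r)\big)|B_r| .
\]
Applying this to $u$ and $w$ and using the energy hypothesis $\int_{B_r}|Du|^2\lesssim r^d$ yields
\[
    \frac{d}{dr}\mathcal A_{u,0}\ge -C\,\frac{\omega_A+\omega_{Q_1}+\omega_{Q_2}+\varrho}{r}\eqqcolon -g .
\]
The Dini assumption then gives $g\in L^1(0,1)$.

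The main obstacle is the estimate $\int_{B_r}|\nabla w|^2\lesssim r^d$. Homogeneity gives
\[
    \int_{B_r}|\nabla w|^2=\frac{r}{d}\int_{\partial B_r}\big(|\nabla_\tau u|^2+u^2/r^2\big),
\]
which involves boundary data rather than the bulk integral in the hypothesis. Two-phase points have $u(0)=0$, so $u$ is Lipschitz-controlled near $0$ exactly as in the one-phase case. The plan is to handle this step by interpreting the hypothesis as Lipschitz growth $|\nabla u|\lesssim1$ near $x_0$, which is what is used in Theorem~\ref{thm : almost mon formula}. If only the integral bound is available, I would instead use it for a.e.\ $r$ via averaging over $r\in(\rho,2\rho)$. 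This gives $\int_{\partial B_r}|\nabla u|^2\lesssim r^{d-1}$ on a large set of radii, and it suffices since an $L^1$ lower bound on the derivative integrated over $r$ is all that is needed.
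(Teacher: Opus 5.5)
Your proposal is correct and follows the paper's proof. You freeze the coefficients at $x_0$ and use the $1$-homogeneous extension $w$ as a competitor in the almost minimality; its positivity and negativity sets are cones, so $\mathcal H(w)=0$ and $\mathcal B(u)=\mathcal B(w)$. You then absorb the coefficient errors, and the only two-phase changes are in $\mathcal B$ and the extra $\omega_{Q_2}$ term.

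Your concern about $\int_{B_r}|\nabla w|^2\lesssim r^d$ is justified, since the paper simply asserts it. Under the integral hypothesis, restricting to a good set of radii does not by itself give a pointwise lower bound on the derivative. The clean fix is to keep $C\,\omega_A(r)\,r^{-d}\int_{\partial B_r}\bigl(|\nabla_\tau u|^2+u^2/r^2\bigr)$ inside $g$ and check $g\in L^1$ by summing over dyadic shells. This uses $\int_{B_\rho}|\nabla u|^2\lesssim\rho^d$ and $\int_{B_\rho}u^2\lesssim\rho^{d+2}$, the latter following from $u(x_0)=0$ and Poincar\'e on dyadic balls.
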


\begin{proof}
The proof proceeds along the same lines as that of Theorem \ref{thm : almost mon formula}, with the only difference being that the frozen functional now takes the form
\[
\mathcal J_{0}(v,E)\coloneqq \int_{E}\left(\frac12|\nabla v|^2+Q_1(0)\chi_{\{v>0\}} +Q_2(0)\chi_{\{v<0\}} \right)\,dx, \qquad E\subset \mathbb R^d.
\]
The corresponding changes in the term $\mathcal{B}(u)$ are immediate, and the comparison between the original and frozen functionals yields one additional term.
\end{proof}

Similarly, in the Alt--Philips setting, one may consider the functional
\begin{equation}\label{two-phase functional AP}
    \mathcal{J}(w) \coloneqq \int \left( \frac{1}{2} \, \langle A(x)\nabla w, \nabla w \rangle + \delta_1(x)\,w_+^{\gamma(x)} + \delta_2(x)\,w_-^{\gamma(x)} \right)\,dx,
\end{equation}
and the corresponding almost-monotonicity formula is the following.

\begin{theorem}\label{thm:almost-monotonicity-3}
Let $u$ be an almost minimizer to the functional \eqref{def:main functional} in $B_2$. Assume further that $A(x_0) = I_d$ and
\[
	\omega_A(\cdot) + \omega_{\delta_1}(\cdot) + \omega_{\delta_2}(\cdot) + \varrho(\cdot) + |\ln(\cdot)|\omega_{\gamma}(\cdot) \in C^{0,dini} ([0,2]),
\]
and
\[
    \sup_{x\in B_r(x_0)}\Big(|u(x)|+|\nabla u(x)|^{\frac{2}{\gamma(x_0)}}\Big) \lesssim r^{\beta(x_0)},
\]
for some $x_0 \in F(u)\cap B_1$. Then, the function $\mathcal{A}_{u,x_0}(r)$ defined by
\[
     r^{-(d + 2(\beta(x_0) - 1))}\mathcal{J}_{x_0}(u,B_r(x_0)) - \frac{1}{2}\beta(x_0)r^{-(d + 2\beta(x_0) -1)}\int_{\partial B_r(x_0)}u^2\, d\mathcal{H}^{d-1},
\]
is almost monotone, meaning that there exists a nonnegative real function $g \in L^1(0,2)$ such that
\[
	\frac{d}{dr} \mathcal{A}_{u,x_0}(r) \geq -g(r) \quad \text{for} \quad r \in (0,1/2).
\]
\end{theorem}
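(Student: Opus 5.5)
The plan is to repeat the proof of Theorem~\ref{thm:almost-monotonicity} almost verbatim, since that argument never used the sign of $u$ except through the growth bound and the structure of the potential. We may take $x_0=0$ and write $\beta_0=\beta(0)$, $\gamma_0=\gamma(0)$. The frozen functional is
\[
\mathcal J_0(v,E)=\int_E\Big(\tfrac12|\nabla v|^2+\delta_1(0)v_+^{\gamma_0}+\delta_2(0)v_-^{\gamma_0}\Big)\,dx .
\]
Differentiating $\mathcal A_{u,0}$ gives again the identity \eqref{eq:derivative-formula}:
\[
\frac{d}{dr}\mathcal A_{u,0}(r)=\mathcal B(u)-\kappa_r\mathcal J_0(u,B_r)+\mathcal H_r(u).
\]
Here $\mathcal B(u)$ now contains $\delta_1(0)u_+^{\gamma_0}+\delta_2(0)u_-^{\gamma_0}$ on $\partial B_r$, together with the tangential gradient and $u^2$ terms. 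The term $\mathcal H_r(u)\ge 0$ is unchanged, because the computation only involves $|\nabla u|^2$, $u\,\partial_\nu u$ and $u^2$, none of which depend on the sign of $u$.

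Next, define $w$ as the $\beta_0$-homogeneous extension of $u|_{\partial B_r}$. This $w$ may change sign, which is harmless: $w_\pm$ is exactly the homogeneous extension of $u_\pm$, so $\mathcal A_{w,0}$ is constant, $\mathcal H_r(w)=0$, and therefore $\mathcal B(u)=\mathcal B(w)=\kappa_r\mathcal J_0(w,B_r)$. The function $w\in u+H^1_0(B_r)$ is an admissible competitor for the two-phase functional, so almost minimality and $\mathcal J\ge0$ give $(1-\varrho(r))\mathcal J(u,B_r)\le\mathcal J(w,B_r)$. Splitting $\mathcal J_0=\mathcal J+(\mathcal J_0-\mathcal J)$ for both $u$ and $w$ exactly as before yields
\[
\frac{d}{dr}\mathcal A_{u,0}\ge \mathcal H_r(u)-\kappa_r\varrho(r)\mathcal J(u,B_r)-\kappa_r\sum_{v\in\{u,w\}}\big|\mathcal J_0(v,B_r)-\mathcal J(v,B_r)\big| .
\]

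The remaining work is the error estimates, and the only genuinely new terms are the two potential terms, handled phase by phase. Pointwise we have
\[
\big|\delta_i(0)v_\pm^{\gamma_0}-\delta_i(x)v_\pm^{\gamma(x)}\big|\le|\delta_i(0)-\delta_i(x)|\,v_\pm^{\gamma_0}+\delta_i(x)\big|v_\pm^{\gamma_0}-v_\pm^{\gamma(x)}\big| .
\]
The assumed two-phase growth bound gives $|u|\lesssim r^{\beta_0}$ on $B_r$, and $|w|\lesssim r^{\beta_0}$ by construction. Applying the mean value theorem to $s\mapsto t^s$ then bounds this by
\[
C\big(\omega_{\delta_i}(r)+\omega_\gamma(r)|\ln r|\big)\,r^{\beta_0\gamma_0}.
\]
Some care is needed where $v_\pm$ is tiny, since $t^s|\ln t|$ blows up relative to $r^{\beta_0\gamma_0}$ there. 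This is handled as in \cite[Proof of Theorem 4.1]{ASTU1} by splitting into the region where $v_\pm\ge r^{2\beta_0}$ (where $|\ln v_\pm|\lesssim|\ln r|$) and its complement, whose contribution is $\lesssim r^{c}$ with $c$ above $\beta_0\gamma_0$ once $\gamma$ is close to $\gamma_0$. The matrix term is bounded by
\[
\omega_A(r)\int_{B_r}\big(|\nabla u|^2+|\nabla w|^2\big)\lesssim\omega_A(r)\,r^{d+2(\beta_0-1)}.
\]
For $u$ this follows from the gradient growth $|\nabla u|\lesssim r^{\beta_0\gamma_0/2}=r^{\beta_0-1}$. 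For $w$, the tangential gradient comes from $u$ on $\partial B_r$, and the radial derivative is $\beta_0 w/|x|$; integrating $|x|^{2(\beta_0-1)}$ is fine since $\beta_0>1$. Finally, $\mathcal J(u,B_r)\lesssim r^{d+\beta_0\gamma_0}$ by the same growth bound.

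Since $\beta_0\gamma_0=2(\beta_0-1)$ and $\kappa_r\sim r^{-(d+2\beta_0-1)}$, each error is $\lesssim(\text{modulus})/r$. This produces
\[
g(r)=C\,\frac{\omega_A(r)+\omega_{\delta_1}(r)+\omega_{\delta_2}(r)+\varrho(r)+|\ln r|\,\omega_\gamma(r)}{r},
\]
which lies in $L^1(0,2)$ by the Dini hypothesis. The main obstacle is the $\ln$-factor estimate for $v_\pm^{\gamma_0}-v_\pm^{\gamma(x)}$ near the zero set of each phase. This is where the $\log$-Dini assumption on $\gamma$ is used, and the splitting argument above is what makes it go through.
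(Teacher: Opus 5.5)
Your proposal is correct and follows the same route as the paper. The paper simply reruns the one-phase argument with the two-phase frozen functional: the derivative identity, the $\beta_0$-homogeneous extension as competitor, almost minimality, and freezing errors controlled by the growth bound, with each phase treated separately. The splitting near the zero set is unnecessary, though harmless. Since $t\mapsto t^{\xi}|\ln t|$ is increasing for small $t$, the direct mean value bound on $0<t\lesssim r^{\beta_0}$ already gives $\omega_\gamma(r)|\ln r|\,r^{\beta_0\gamma_0}$, which is what the paper uses. If you keep the splitting, $g$ acquires an extra integrable term $r^{c-\beta_0\gamma_0-1}$.
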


\begin{proof}
Again, the proof is similar to the proof of Theorem \ref{thm : almost mon formula}, except for the frozen functional that becomes
\[
	\mathcal{J}_{x_0}(u,B_r(x_0)) \coloneqq \int_{B_r(x_0)}\left( \frac{1}{2} \, \langle |\nabla w|^2 + \delta_1(x_0)\,w_+^{\gamma(x_0)} + \delta_2(x_0)\,w_-^{\gamma(x_0)} \right)\,dx,
\]
the boundary term $\mathcal{B}(u)$ and the comparison of the frozen functional with the original one. 
\end{proof}

\begin{remark}
We remark that the growth estimate for $u$ at free boundary points may be hard to obtain, depending on whether $x_0$ is a two-phase branching free boundary point or not.
\end{remark}

\bigskip

{\small \noindent{\bf Acknowledgments.} This publication is based upon work supported by King Abdullah University of Science and Technology (KAUST).}

\medskip

\bibliographystyle{amsplain, amsalpha}

\end{document}